\documentclass[10pt, twoside, fleqn, a4paper]{article}
\usepackage{amsmath}
\usepackage{amssymb}
\usepackage{amsthm}
\usepackage{amsfonts}
\usepackage{color}
\usepackage{dsfont}
\usepackage{fullpage}
\usepackage{graphicx}
\usepackage{tabularx}
\usepackage[tight]{subfigure}
\usepackage{times}
\usepackage{enumitem}
\usepackage{pdfpages}

\newcommand{\tildeB}{\widetilde{B}}

\newcommand{\tildeD}{\widetilde{D}}

\newcommand{\de}{\delta}

\newcommand{\ga}{\gamma}
\newcommand{\la}{\lambda}

\newcommand{\si}{\sigma} 
\newcommand{\deriv}[1]{\frac{\partial}{\partial #1}}

    	% PSA arrival rate
    % PSA departure rate
    				% PSA phase transition rate
					% PSA coefficients
					% PSA coefficients
	  % PSA coefficients
		% PSA performance measures
					% PSA state

\newcommand{\E}[1]{\mathbb{E}[#1]}
\newcommand{\Pro}[1]{\mathbb{P}(#1)}
\newcommand{\SCV}[1]{\ensuremath{c^2_{#1}}}

\newcommand{\Cov}[2]{\ensuremath{\mbox{\textup{Cov}}[#1, #2]}}
\newcommand{\Var}[1]{\ensuremath{\mbox{\textup{Var}}[#1]}}

   			 	%indicator function

\newcommand{\deq}{\,{\buildrel d \over =}\,}

\newcommand{\figurewidth}{0.5\textwidth}

\renewcommand{\vec}[1]{\ensuremath{\boldsymbol{#1}}} % vector

\theoremstyle{plain}% default
\newtheorem{theorem}{Theorem}[section]
\newtheorem{apprx}[theorem]{Approximation}
\newtheorem{lemma}[theorem]{Lemma}

\theoremstyle{definition}
\newtheorem{remark}{Remark}[section]

\newtheorem*{example}{Example}

\theoremstyle{remark}

\parskip=0pt plus 0.3ex minus 0ex       %Flexible space between paragraphs. Really useful if you want to avoid big gaps
\addtolength{\parskip}{\baselineskip}
\setlength{\parindent}{0pt}

\definecolor{Blue}{rgb}{0.3,0.3,0.9}
\definecolor{Red}{cmyk}{0,1.,1.,0}

\newcommand{\footnoteremember}[2]
{
  \newcounter{#1}\footnote{#2}\setcounter{#1}{\value{footnote}}  
}
\newcommand{\footnoterecall}[1]
{
  \footnotemark[\value{#1}] 
} 

\begin{document}
\title{Marginal queue length approximations for a two-layered network with correlated queues\footnotetext{Funded in the framework of the STAR-project ``Multilayered queueing systems'' by the Netherlands Organization for Scientific Research (NWO). The research of M.\ Vlasiou is also partly supported by an NWO individual grant through project 632.003.002.}}
\author{
	J.L. Dorsman\footnoteremember{TU/e}{EURANDOM and Department of Mathematics and Computer Science, Eindhoven University of Technology, P.O. Box 513, 5600 MB Eindhoven, The Netherlands.}\footnoteremember{CWI}{Probability and Stochastic Networks, Centrum Wiskunde \& Informatica (CWI), P.O. Box 94079, 1090 GB Amsterdam, The Netherlands.}\\
	\small \texttt{j.l.dorsman@tue.nl}\\
	\and
	O.J. Boxma\footnoterecall{TU/e}\\
	\small \texttt{o.j.boxma@tue.nl}\\
	\and
	M. Vlasiou\footnoterecall{TU/e}\footnoterecall{CWI}\\
	\small \texttt{m.vlasiou@tue.nl}
}

\date{September 2013}
\maketitle

\begin{abstract}
We consider an extension of the classical machine-repair model, where we assume that the machines, apart from receiving service from the repairman, also serve queues of products. The extended model can be viewed as a layered queueing network, where the first layer consists of the queues of products and the second layer is the ordinary machine-repair model. Since the repair time of one machine may affect the time the other machine is not able to process products, the downtimes of the machines are correlated. This correlation leads to dependence between the queues of products in the first layer. Analysis of these queue length distributions is hard, since the exact dependence structure for the downtimes, or the queue lengths, is not known. Therefore, we obtain an approximation for the complete marginal queue length distribution of any queue in the first layer, by viewing such a queue as a single server queue with correlated server downtimes. Under an explicit assumption on the form of the downtime dependence, we obtain exact results for the queue length distribution for that single server queue. We use these exact results to approximate the machine-repair model. We do so by computing the downtime correlation for the latter model and by subsequently using this information to fine-tune the parameters we introduced to the single server queue. As a result, we immediately obtain an approximation for the queue length distributions of products in the machine-repair model, which we show to be highly accurate by extensive numerical experiments.
\end{abstract}

\section{Introduction}
In this paper, we study a layered queueing network (LQN) consisting of two layers. We define an LQN to be a queueing network where in addition to the traditional ``servers'' and ``customers'', there exist entities that act as servers for upper-layer customers and as customers for lower-layer entities. So far, the study of such networks has been limited to computer-science problems; see \cite{FranksEtAl} and references therein for an overview.

The LQN under consideration is motivated by a two-fold extension of the traditional machine-repair model. This model, also known as the \emph{computer terminal model} (cf. \cite{BertsekasGallager}) or as the \emph{time sharing system} (cf. \cite[Section 4.11]{Kleinrock}), is a well-studied problem in the literature. In the machine-repair model, there is a number of machines (two in our case) working in parallel, and one repairman. As soon as a machine fails, it joins a repair queue in order to be repaired by the repairman.
It is one of the key models to describe problems with a finite input population. A fairly extensive analysis of the machine-repair model can be found in Tak\'acs \cite[Chapter 5]{Takacs}. 

We extend this model in two directions. First, we allow machines to have different uptime or repair time distributions. As observed in \cite{GrossInce}, this leads to technical complications. For example, the arrival theorem (cf. \cite{LavenbergReiser}) cannot be used any more to derive the stationary downtime distributions of the machines as is done for the original model in \cite{Wartenhorst}. Secondly, we assume that each of the machines processes a stream of products, which leads to the addition of queues in front of the machines. Observe that in this case a machine has a dual role. As in the traditional model, the machine has a \emph{customer role} with respect to the repairman, but it now also has a \emph{server role} with respect to the products. This leads to a formulation of a LQN with two layers, which we also refer to as the \emph{two-layered model} or simply the \emph{layered model}.

\begin{figure}
\begin{center}
\includegraphics[width=\figurewidth]{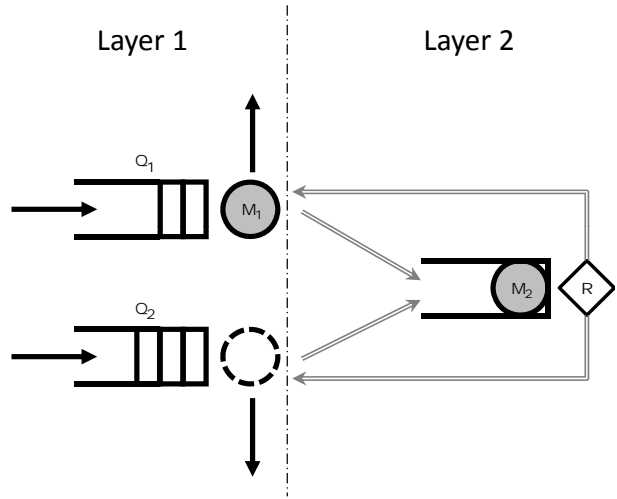}
\caption{The two-layered model under consideration.}
\label{fig:layeredModel}
\end{center}
\end{figure}
The first layer of the resulting LQN contains two queues of products, see Figure \ref{fig:layeredModel}. Each of these queues is served by its own machine. For ease of the discussion, we thus assume that there are two machines only, as opposed to the classical machine-repair model. As will be evident in the sequel, the approach we follow can be readily extended to more machines or repairmen, but computations become increasingly cumbersome. At any point in time, a machine is subject to breakdowns, irrespective of the state of the first-layer queue. When a machine breaks down, the service of a product in progress is aborted. Any progress made is lost, the service requirement resets (\emph{pre-emptive repeat}) and the service starts anew once the machine becomes operational again. 

The second layer consists of a repairman and a repair buffer. If, upon breakdown of a machine, the repairman is idle, the machine is immediately taken into service. Once the machine is again operational, it starts serving products once more. A machine which upon breakdown meets a busy repairman waits in the buffer. As soon as the other machine is repaired, repair of the current machine starts.

In the present study, we analyse the marginal queue length distributions of the queues in the first layer. An important feature of both the classical machine-repair model and the two-layered model under consideration is the fact that machines compete for repair facilities. This introduces significant positive dependencies in their downtimes and thus in the lengths of the queues in the first layer. If the downtime of one machine is very large, the repair time is probably taking longer than usual, increasing the likelihood for the other machine to break down in the meantime. This has an increasing influence on the next downtime of the first machine, leading to positively correlated consecutive downtimes of a machine. As a result, in the two-layered model, a first-layer queue in isolation can be seen as a single server vacation queue with \emph{dependent} server vacation times.  In this paper, we will call this the \emph{single server queue}. Note that the correlation between the machines' downtimes also leads to correlations in the lengths of the first-layer queues. Hence, there is interaction between the layers, which makes analysis of the first layer rather complicated. 

The difficulty one faces when studying this model is that the dependence between the downtimes is not well understood, since it is only implicitly defined through the uptimes and repair times of the machines. Exact results on the queue length of a first-layer queue are not available in general. This holds even for the case of Poisson product arrivals, exponential uptimes and exponential repair times. Although the queue length can then in principle be modelled as a reflected Markov-modulated L\'evy process, its distribution is not easily derived from that. For example, standard results on these processes such as \cite[Proposition XI.2.2]{Asmussen} lead to a linear system with a number of equations larger than the number of unknowns, defying straight-forward solutions. Numerical methods to evaluate the queue lengths are available, such as simulation or the power-series algorithm (see e.g. \cite{BlancOverview} or \cite{DorsmanVDMeiVlasiou}). However, these methods do not give insights into the parameter effects on the queue length distributions, may be cumbersome to implement, need relatively much computation time and do not scale well in e.g. the number of machines or repairmen considered. For these reasons, one has to resort to approximations.

We do so by drawing a connection between the layered model and the single server queue. We model the latter as an M/G/1 queue with dependent successive vacation lengths, in order to capture the correlations of the machines' repair times in the layered model. Thus, we use the following approach to approximate the layered model:
\begin{enumerate}[topsep=0pt, itemsep=0pt, leftmargin=*]
\item For the single server queue, we use an explicit, generic dependence form and obtain exact results for the marginal queue length distributions.
\item For the layered model, we compute several characteristics of the downtime structure, such as the first two moments of the downtime distribution and the auto-correlation coefficient of a machine's consecutive downtimes.
\item We choose the parameters of the generic dependence form of the single server model so that they match the downtime characteristics of the layered model computed in the previous step.
\end{enumerate}
Thus, we use the exact results from step one with the parameters from step two to derive an approximation for the marginal queue length distributions of the first-layer queues in the layered model. As we will see in an extensive numerical study, the resulting approximation performs well over a wide range of parameter settings.

Another approximation for the layered model has been proposed by Wartenhorst \cite{Wartenhorst}, who derives approximations for the first two moments of the queue length distribution of a first-layer queue. In his model he assumes equal uptime and repair time distributions for the machines, an assumption that we generalise in this work. In his study, Wartenhorst approximates the first two moments of a first-layer queue with those of a single server vacation queue, where the distribution of the vacations lengths is taken to be equal to that of the machine's downtimes in the layered model, but the vacation lengths are assumed to be \emph{completely independent}. The resulting approximation is exact by construction for a system where downtimes are independent, and accurate whenever downtimes are only slightly dependent. Since the dependence is completely ignored, Wartenhorst's approximation becomes more inaccurate as the dependence increases. In this paper, we explicitly model the dependence, thus improving accuracy greatly, and obtain an approximation for the \emph{complete distribution} of the queue length.

As mentioned, we draw a connection with the M/G/1 queue with server vacations. This queue has been studied extensively, see e.g.\ \cite{DoshiSurvey, DoshiBoek} for surveys. Often vacation lengths or downtimes are assumed to be independent of any other event in the system. Examples of studies where downtimes are assumed to be dependent can be found in \cite{HarrisMarchal}, where vacation lengths are dependent on the number of customers in the system, and in \cite{BoxmaMandjesKella}, where vacation lengths are dependent on the length of the previous active period of the server. In the context of polling systems, vacation queues with interdependent vacation lengths are considered in \cite{Altman, Eliazar, GroeneveltAltman}. However, in that context, the start of a server vacation is usually confined to a point in time where the server concludes the service of a customer. This is not the case in the current context, where a machine can break down at any point in time.

Section \ref{notation} introduces the notation required for both the two-layered model and the single server queueing model, and describes the dependence form used. In Section \ref{analysisSSQ}, we analyse the queue length distribution of the single server queue at various time epochs. This results in an expression for the (PGF of its) steady state queue length distribution at an arbitrary point in time. We believe this result to be of independent interest, but our main goal is to apply this result to the layered model. The obtained PGF provides an approximation for the marginal queue length distribution of the corresponding first-layer queue in the layered model. This forms the main result of this paper and is discussed in Section \ref{approx}. In Section \ref{numStudy}, extensive numerical results show that the obtained approximation is highly accurate. We further identify the factors determining the level of accuracy. We conclude in Section \ref{conclusion} with a summary and the major conclusions of this work.

\section{Notation}\label{notation}
In this section, we introduce the notation we use and describe the dependence structure we consider. 

\paragraph*{The two-layered model.} 
The layered model consists of two machines $M_1$ and $M_2$ and one repairman $R$, see Figure \ref{fig:layeredModel}. Each machine $M_i$ serves its own first-layer queue $Q_i$ on a first-come-first-serve (FCFS) basis.  Products arrive at $Q_i$ according to a Poisson process with rate $\lambda_i$. The service times required by the products in $Q_i$ are generally distributed according to $B_i$. The Laplace-Stieltjes transform (LST) of the service time, $\E{e^{-sB_i}}$ ($Re(s) \ge 0$), is denoted by $\tildeB_i(s)$. The steady-state queue length of $Q_i$, including the product in service, is denoted by $L_i$. After an exponentially ($\sigma_i$) distributed uptime or lifetime, denoted by $U_i$, a machine $M_i$ will break down, and the service of $Q_i$ inevitably stops. The service of a product in progress is then aborted, and will be restarted once the machine is operational again (\emph{pre-emptive repeat}). When a machine breaks down, it moves to the repair buffer, where it will wait if the repairman is busy repairing the other machine; otherwise the repair will start immediately. A downtime $D_i$ of a machine thus consists of a repair time and possibly a waiting time. The time {$R_i$} needed for a repairman to return $M_i$ to an operational state is {generally} distributed. After a repair, the machine returns to $Q_i$ and commences service again. Finally, analogous to the notion of load defined for the single server model, we define
\begin{equation}
\rho_i := \lambda_i\E{B_i}<\frac{\E{U_i}}{\E{U_i}+\E{D_i}}.
\end{equation}
As noted before, the consecutive downtimes are positively cross-correlated due to the interaction of the machines through the repairman. As a result, from the point of view of the products, the machine can be seen as the server in an M/G/1 queue with one-dependent server vacations, where the products themselves are the customers. This is how we approximate the marginal queue length of the products for the layered model. Details on the basic assumptions are given below.

\paragraph*{Single server model with one-dependent server vacations.}
In the single server model, the queue is fed by a Poisson process with parameter $\lambda$. The service time $B$ required by arriving customers is generally distributed. The uptime $U$ from the moment a server has just ended a vacation period until the start of the next one is exponentially distributed with parameter $\sigma$. After this time period $U$, the server starts a vacation for $D$ time units (a downtime). If a job is in service when the server breaks down, all of the work done on the job is lost and processing of the job is restarted once the server ends its vacation (\emph{pre-emptive repeat}).
The steady-state queue length of the queue, including the job in service, is denoted by $L$. The LST of the service time, $\E{e^{-sB}}$ ($Re(s) \ge 0$), is denoted by $\tildeB(s)$. Likewise, $\tildeD(s)$ represents the LST of $D$. 

This model differs from most vacation queues studied in literature, because the durations of vacations (or breakdowns) here are one-dependent. For the dependence, we assume a generic structure, which can be used to model positive correlations between consecutive downtimes. We describe the dependence structure of the downtimes by specifying the LST of a downtime $D(k+1)$ conditioned on its previous downtime $D(k)$:
\begin{equation}\label{eq:combedecomposition}
\E{e^{-sD(k+1)}|D(k)=t} = \chi(s)e^{-g(s)t}, \qquad Re(s) \ge 0,
\end{equation}
where $\chi(s)$ and $g(s)$ are analytic functions in $s$ with $\chi(0) = 1-g(0) = 1$. This generic dependence structure is introduced in \cite{BoxmaCombe} to model positive correlation between two random variables. From \eqref{eq:combedecomposition}, we have that $D(k+1)$ can be interpreted as follows. It can be seen as a sum of an independent component represented by the LST $\chi(s)$, and a component dependent on the previous downtime, represented by $e^{-g(s)t}$. In particular, if one assumes that $g(s)$ has a completely monotone derivative, i.e.\ $(-1)^{n+1}\frac{d^n}{d s^n}g(s) \ge 0$ for all $n \ge 1$, then $e^{-g(s)}$ is the LST of an infinitely divisible distribution (see \cite[p.\ 450]{Feller}). We will use this assumption in the proof of  Lemma \ref{lem:uniqueRootPhi}. 

To give an indication of how rich the class of dependence structures that satisfy \eqref{eq:combedecomposition} is, note that the class of infinitely divisible distributions is strongly connected to the class of L\'evy processes (see e.g. \cite[Chapter 1]{Kyprianou}). In particular, for a L\'evy process $\{X(t), t \ge 0\}$, one has $\E{e^{-sX(t)}} = e^{-g(s)t}$. Thus, $D(k+1)$ consists of a time independent from the previous downtime $D(k)$, and another component, the value of which is that of a L\'evy process observed at a time which is governed by $D(k)$. For several examples of dependence structures that \eqref{eq:combedecomposition} covers, see e.g. \cite{BoxmaCombe} or \cite{VlasiouAdanBoxma}.

In the layered model, a downtime can also be thought of as a sum of an independent component (e.g., the repair time) and a component dependent on the previous downtime (the waiting time). Therefore, the functions $\chi(s)$ and $g(s)$ can be chosen in such a way that they together represent the distribution and the dependence of these downtimes closely. As we discuss in Section \ref{approx}, \eqref{eq:combedecomposition} does not model the downtimes of the layered model perfectly. However, as we will see in Section \ref{numStudy}, it is a good fit.

Note that the functions $\chi(s)$ and $g(s)$ determine the stationary downtime $D := \lim_{k\rightarrow\infty} D(k)$. In steady-state ($k \rightarrow \infty$) it holds that $\E{e^{-sD(k+1)}} = \E{e^{-sD(k)}} = \tildeD(s)$, so we have that 
\begin{equation}\label{eq:identityD}
\tildeD(s) = \int_{t=0}^\infty \chi(s)e^{-g(s)t} d\Pro{D<t} = \chi(s)\tildeD(g(s)),
\end{equation}
and thus
\begin{equation}\label{eq:momentsD}
\E{D} = -\tildeD'(0) = \frac{\chi'(0)}{g'(0)-1} \; \mbox{and} \; \E{D^2} = \tildeD''(0) = \frac{\chi''(0)-\E{D}(2\chi'(0)g'(0)+g''(0))}{1-g'(0)^2}.
\end{equation}
By iterating \eqref{eq:combedecomposition}, one obtains an explicit expression for $\tildeD(s)$:
\begin{equation}\label{eq:infprodform}
\tildeD(s) = \prod_{j=0}^\infty \chi(g^{(j)}(s)),
\end{equation}
where $g^{(0)}(s) = s$ and $g^{(j)}(s) = g(g^{(j-1)}(s))$. The bivariate LST of $D(k)$ and $D(k+1)$ is given by
\begin{equation}\label{eq:jointDLST}
\E{e^{-sD(k)-zD(k+1)}} = \int_{t=0}^\infty e^{-st} \E{e^{-zD(k+1)}|D(k) = t} d \Pro{D(k)<t}= \chi(z)\E{e^{-(s+g(z))D(k)}},
\end{equation}
out of which the joint expectation of two subsequent downtimes $D(k)$ and $D(k+1)$ can be derived:
\begin{equation}\label{eq:jointDExpectation}
\E{D(k)D(k+1)} = \deriv{s}\deriv{z}\chi(z)\E{e^{-(s+g(z))D(k)}}|_{s=0, z=0} = -\chi'(0)\E{D(k)} + g'(0)\E{D(k)^2}.
\end{equation}
We obtain an expression for the bivariate LST in steady-state (i.e. $k \rightarrow \infty$) by combining \eqref{eq:infprodform} and \eqref{eq:jointDLST}:
\begin{equation*}
\lim_{k \rightarrow \infty} \E{e^{-sD(k)-zD(k+1)}} = \chi(z)\prod_{j=0}^\infty \chi(g^{(j)}(s+g(z))).
\end{equation*}
Finally, the stability condition for the single server model is given by 
\begin{equation}
\rho := \lambda\E{B} < \frac{\E{U}}{\E{U}+\E{D}}.
\end{equation}

\section{Analysis of the single server model}\label{analysisSSQ}
In this section, we compute the exact (PGF of the) queue length distribution of the single server model with one-dependent vacations. We later use these results to derive approximations for the layered model. 

We first derive an expression for the PGF of $N$, the queue length distribution at the beginning of an uptime, by studying the transient behaviour of the queue for two server up-down cycles. Thus, we obtain PGF's for $M$ and ultimately $L$, the queue lengths at the end of an uptime and at an arbitrary point in time. An observation length of one cycle would not suffice, since we explicitly need to take the dependence between consecutive downtimes (and thus dependence between cycle lengths) into account. Thus, we observe the system in its $k^{th}$ uptime $U(k)$, as well as the following $k^{th}$ downtime $D(k)$ and in the periods $U(k+1)$ and $D(k+1)$ thereafter. Referring to the queue length distribution at the end of an uptime as $M$, let $N(k)$, $M(k)$, $N(k+1)$, $M(k+1)$ be the corresponding queue lengths, see Figure \ref{fig:queueLength}.
\begin{figure}
\setlength{\unitlength}{2.7cm}
\begin{picture}(1,1)
\put(0,0){\line(1,0){0.6}}
\put(0.6,0){\line(0,1){0.75}}
\put(0.6,0.75){\line(1,0){1.0}}
\put(0.4, 0.85){${N(k)}$}
\put(1.0, 0.2){$U(k)$}
\put(1.4, 0.85){${M(k)}$}
\put(1.6,0.75){\line(0,-1){0.75}}
\put(1.6,0){\line(1,0){0.6}}
\put(1.725, 0.2){$ D(k)$}
\put(2.2,0){\line(0,1){0.75}}
\put(2.2,0.75){\line(1,0){1.3}}
\put(1.9, 0.85){$N(k+1)$}
\put(2.55, 0.2){$U(k+1)$}
\put(3.2, 0.85){$ M(k+1)$}
\put(3.5,0.75){\line(0,-1){0.75}}
\put(3.5,0){\line(1,0){1}}
\put(3.7, 0.2){$ D(k+1)$}
\put(4.5,0){\line(0,1){0.75}}
\put(4.5,0.75){\line(1,0){1}}
\put(4.2, 0.85){$ N(k+2)$}
\end{picture}
\caption{Two server up/down cycles.}
\label{fig:queueLength}
\end{figure}
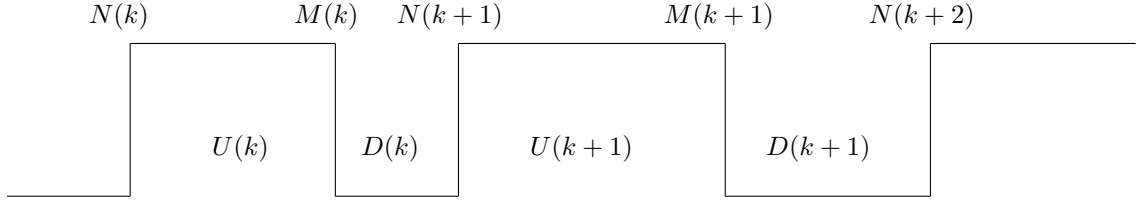
For $k \rightarrow \infty$, we obviously have that
\begin{equation}\label{eq:id} 
\E{p^{N(k)}} = \E{p^{N(k+2)}} = \E{p^N}.
\end{equation}
In Section \ref{NkPlusTwoInNk}, we first express $\E{p^{N(k+2)}}$ in terms of $\E{p^{N(k)}}$. This leads to an expression for $\E{p^{M(k)}}$ in $\E{p^{N(k)}}$, $\E{p^{N(k+1)}}$ in $\E{p^{M(k)}}$, etc. We then compute $\E{p^N}$ in Section \ref{finalisingEPN}.  In Section \ref{obtainingEPL}, we use the results for the embedded times to compute $\E{p^M}$ and $\E{p^L}$, the PGF of the queue length at an arbitrary point in time. We conclude the analysis of the single server model in Section \ref{effectDependence} by illustrating the effects of dependence in downtimes. We believe that the analysis of such a single server queue with dependence between successive vacations is not only useful for studying the layered model, but is also of independent interest.

\subsection{Behaviour of the queue length in two server up/down cycles}\label{NkPlusTwoInNk}
To obtain a relation between $\E{p^{N(k+2)}}$ and $\E{p^{N(k)}}$, we observe the way the queue length evolves in each of the periods $U(k)$, $D(k)$, $U(k+1)$ and $D(k+1)$. Connecting the results leads to an expression for $\E{p^{N(k+2)}}$ in terms of  $\E{p^{N(k)}}$.

\subsubsection{The queue length distribution during the first uptime}
We first derive a relation between $\E{p^{M(k)}}$ and $\E{p^{N(k)}}$. During the first uptime $U(k)$, the server is accepting and processing customers. This means that the queue length in this period of time evolves similarly to the length of a regular M/G/1 queue during an exponential ($\sigma$) interval. This M/G/1 queue has the same customer arrival process and the same service time distribution, but does not have any service interruptions or server downtimes.

A relation between the PGFs of the queue length distribution at the beginning and the end of an exponentially distributed time interval in an M/G/1 queue can be obtained from the queue length transition probabilities between these two points in time. In \cite[p.\ 246]{CohenSingleServer}, these transition probabilities are derived as well as the resulting relation between the queue lengths at the beginning and end of an exponentially distributed time interval. The relation between $M(k)$ and $N(k)$ in our context immediately follows:

\begin{equation} \label{eq:periodUK}
\E{p^{M(k)}} = A(p)*\E{p^{N(k)}}+K(p)*\E{\mu^{N(k)}(\si)},
\end{equation}
where 
\begin{align*}
{A(p)} & {=\frac{\si}{\si+\la(1-p)}\frac{p(1-\tildeB(\si+\la(1-p)))}{p-\tildeB(\si+\la(1-p))}}, \\
{K(p)} & {=-\frac{\si}{\si+\la(1-\mu(\si))}\frac{(1-p)\tildeB(\si+\la(1-p))}{p-\tildeB(\si+\la(1-p))}} 
\end{align*}
and $\mu(\sigma)$ is the LST of a busy period in the regular M/G/1 queue evaluated at $\sigma$.  The value $\mu(\si)$ is the unique root of the expression $p - \tildeB(\si+\lambda(1-p))$  with $|\mu(\si)| < 1$ (for a proof of uniqueness, see \cite[p.\ 47--49]{Takacs}). Therefore, $\mu(\si)$ is a pole of both $A(p)$ and $K(p)$, but these poles compensate each other. More specifically, we find by standard methods the following result that we will need in the sequel:
\begin{align}
\lim_{p\rightarrow \mu(\si)}& \big[A(p) + K(p)\big]\notag  \\
=& \lim_{p\rightarrow \mu(\si)} \Big(\frac{\si}{\si+\la(1-p)}+\left(\frac{\si}{\si+\la(1-p)}-\frac{\si}{\si+\la(1-\mu(\si))}\right)\frac{(1-p)\tildeB(\si+\la(1-p))}{p-\tildeB(\si+\la(1-p))}\Big) \notag \\
=&  \frac{\si}{\si+\la(1-\mu(\si))}+\frac{\la\mu(\si)\si(1-\mu(\si))}{\big(1+\la B'(\si+\la(1-\mu(\si)))\big)\big(\si+\la(1-\mu(\si)))^2}. \label{eq:limAmuPlusBmu}
\end{align}

\subsubsection{The queue length distribution during the first downtime}\label{periodDK}
During the first downtime $D(k)$, the server does not process any customers. Therefore, the queue length increases by the number of customer arrivals in this period. More specifically, the difference between $M(k)$ and $N(k+1)$ is exactly the number of Poisson arrivals during $D(k)$. It will prove convenient in later calculations to condition on the event $D(k) = t$ for any $t \in \mathbb{R}_+$. Let $H(t)$ be Poisson ($\lambda t$) distributed, i.e.\ the number of Poisson arrivals during $D(k)=t$. We then obtain the following relation between $\E{p^{N(k+1)}|D(k) = t}$ and $\E{p^{M(k)}}$:
\begin{align}
\E{p^{N(k+1)}|D(k) = t} =& \E{p^{M(k)+H(t)}} \notag \\
							 =& \E{p^{M(k)}}\sum_{i=0}^\infty p^i e^{-\la t}\frac{(\la t)^i}{i!} \notag \\
							 =& \E{p^{M(k)}}e^{-\la(1-p)t}.\label{eq:periodDK} 
\end{align}

\subsubsection{The queue length distribution during the second uptime}
We now obtain a relation between $\E{p^{M(k+1)}|D(k)=t}$ and $\E{p^{N(k+1)}|D(k)=t}$. During the second uptime $U(k+1)$, the server is processing customers for an exponentially ($\sigma$) distributed amount of time, which means that the analysis is largely the same as the analysis of the queue length during the first uptime $U(k)$. The only difference stems from the fact that we now choose to condition on the event $D(k)=t$, in order to be able to concatenate all the results later on. Analogous to \eqref{eq:periodUK}, we have
\begin{equation} \label{eq:periodUKplusOne}
\E{p^{M(k+1)}|D(k)=t} = A(p)*\E{p^{N(k+1)}|D(k)=t}+K(p)*\E{\mu^{N(k+1)}(\si)|D(k)=t}
\end{equation}
with $A(p)$, $K(p)$ and $\mu(\si)$ as before.

\subsubsection{The queue length distribution during the second downtime}\label{periodDKplusOne}
To obtain a relation between $\E{p^{N(k+2)}|D(k)=t}$ and $\E{p^{M(k+1)}|D(k)=t}$, note that the server is not processing customers during the period $D(k+1)$, which again means that the difference between $M(k+1)$ and $N(k+2)$ is equal to the number of Poisson arrivals during the period $D(k+1)$. The duration of $D(k+1)$ is dependent on $D(k)$, which is described by the LST in \eqref{eq:combedecomposition} conditioned on $D(k)=t$. Therefore, the previously introduced conditioning on the event $D(k)=t$ for $t \in \mathbb{R}$ is convenient at this point. Extending the analysis of Section \ref{periodDK} to the second downtime, conditioned on the duration of the first downtime, we implement the dependence in \eqref{eq:combedecomposition} and obtain the following relation:
\begin{align}
\E{p^{N(k+2)}|D(k)=t} &= \int_{u=0}^\infty \E{p^{M(k+1)+H(u)}|D(k)=t} \;d\Pro{D(k+1)<u|D(k)=t} \notag \\
							 &= \int_{u=0}^\infty \E{p^{M(k+1)}|D(k)=t}e^{-\la(1-p)u} \;d\Pro{D(k+1)<u|D(k)=t} \notag \\
							 &= \E{p^{M(k+1)}|D(k)=t}\E{e^{-\la(1-p)D(k+1)}|D(k)=t} \notag \\
							 &= \E{p^{M(k+1)}|D(k)=t}\chi(\la(1-p))e^{-g(\la(1-p))t}, \label{eq:periodDKplusOne}
\end{align}
where $\E{p^{H(u)}} = e^{-\la(1-p)u}$ is the PGF of the number of Poisson arrivals during a time period $u$.

\subsubsection{Connecting all periods}
Combining \eqref{eq:periodUK}, \eqref{eq:periodDK}, \eqref{eq:periodUKplusOne} and \eqref{eq:periodDKplusOne},
we obtain an expression for $\E{p^{N(k+2)}|D(k)=t}$ in terms of $\E{p^{N(k)}}$. Keeping in mind \eqref{eq:limAmuPlusBmu} and the fact that $\mu(\si)$ is a pole of $A(p)$ and $K(p)$, we note that for the substitution of $\E{\mu^{M(k)}(\si)}$, the following important observation holds:
\begin{align*}
\lim_{p \rightarrow \mu(\si)}\E{p^{M(k)}}=& \lim_{p\rightarrow\mu(\si)} \sum_{i=0}^\infty \big(A(p)*p^i + K(p) * \mu^i(\si)\big)\Pro{N(k) = i} \\
=& \sum_{i=0}^\infty\Big( \lim_{p\rightarrow \mu(\si)} \big[A(p) + K(p)\big]\mu^i(\si) + \lim_{p\rightarrow \mu(\si)} \big[A(p)(p^i-\mu^i(\si))\big]\Big)\Pro{N(k) = i} \\
=& \sum_{i=0}^\infty\Big( \lim_{p\rightarrow \mu(\si)} \big[A(p) + K(p)\big]\mu^i(\si) \\
&\qquad \qquad+ \frac{\si (1-\mu(\si))}{(\si+\la(1-\mu(\si)))(1+\la \tildeB'(\si+\la(1-\mu(\si))))}i\mu^i\Big)\Pro{N(k) = i} \\
=& \Big(\lim_{p\rightarrow \mu(\si)} \big[A(p) + K(p)\big]\Big)\E{\mu^{N(k)}(\si)} \\
&\qquad \qquad+ \frac{\si (1-\mu(\si))}{(\si+\la(1-\mu(\si)))(1+\la \tildeB'(\si+\la(1-\mu(\si))))}\E{N(k)\mu^{N(k)}(\si)}. 
\end{align*}
Hence, an extra term containing $\E{N(k)\mu^{N(k)}(\si)}$ arises in the expression for $\E{p^{N(k+2)}|D(k)=t}$. We obtain
\begin{align}
&\E{p^{N(k+2)}|D(k) = t} =\chi(\la(1-p))A^2(p)e^{-(\la(1-p)+g(\la(1-p)))t}\E{p^{N(k)}} \notag \\
&+ \chi(\la(1-p))K(p)\Big(A(p)e^{-(g(\la(1-p))+\la(1-p))t}\notag \\ 
&\qquad\qquad\qquad\qquad\qquad +\lim_{p\rightarrow\mu(\si)}\big[A(p)+K(p)\big]e^{-(g(\la(1-p))+\la(1-\mu(\si)))t}\Big)\E{\mu^{N(k)}(\si)}  \notag \\
&+\chi(\la(1-p))K(p)e^{-(g(\la(1-p))+\la(1-\mu(\si)))t} \notag \\
&\qquad\qquad\qquad\qquad\qquad * \frac{\si (1-\mu(\si))}{(\si+\la(1-\mu(\si)))(1+\la \tildeB'(\si+\la(1-\mu(\si))))} \E{N(k)\mu^{N(k)}(\si)}. \label{eq:NkPlusTwo}
\end{align}
In the course of the previous calculations, we conditioned on the event $D(k)=t$ in order to incorporate the dependence between the downtimes. In the expression for $\E{p^{N(k+2)}|D(k) = t}$, we see that the value $t$ is only found in the form $e^{-st} (s\ge 0)$, meaning that unconditioning leads to expressions in terms of the LST $\tildeD(\cdot)$:
\begin{align}
\E{p^{N(k+2)}} &= \int_{t=0}^\infty \E{p^{N(k+2)}|D(k) = t} d\Pro{D(k)<t} \notag \\
&= E(p) \E{p^{N(k)}} + F(p) \E{\mu^{N(k)}(\si)} + G(p)\E{N(k)\mu^{N(k)}(\si)}, \label{eq:NkplusTwoInNk}
\end{align}
with 
\begin{align}
  {E(p)} =& \chi(\la(1-p))A^2(p)\tildeD(\la(1-p)+g(\la(1-p))), \label{eq:Ep}\\
  {F(p)} =& \chi(\la(1-p))K(p)\Big(A(p)\tildeD(\la(1-p)+g(\la(1-p))) \notag \\
  				& \qquad \qquad \qquad \qquad \qquad+\tildeD(\la(1-\mu(\si))+g(\la(1-p)))\lim_{z\rightarrow\mu(\si)}[A(z)+K(z)]\Big), \notag \\
  {G(p)} =& \chi(\la(1-p))K(p)\tildeD(\la(1-\mu(\si))+g(\la(1-p)))\notag \\
  				& \qquad \qquad \qquad \qquad \qquad *\frac{\si(1-\mu(\si))}{(\si+\la(1-\mu(\si)))(1+\lambda\tildeB(\si+\la(1-\mu(\si))))}. \notag
\end{align}
This expression gives a relation between $\E{p^{N(k+2)}}$ and $\E{p^{N(k)}}$.

\subsection{The queue length distribution at the beginning of an arbitrary uptime}\label{finalisingEPN}
We now compute $\E{p^N} = \lim_{k\rightarrow\infty}\E{p^{N(k)}}$. Combining \eqref{eq:id} and \eqref{eq:NkplusTwoInNk}, we find
\begin{equation}\label{eq:EPNPrelim}
\E{p^N} =\frac{F(p)\E{\mu^N(\si)}+G(p)\E{N\mu^N(\si)}}{1-E(p)}
\end{equation}
with $E(p), F(p)$ and $G(p)$ as before. Observe that this expression has two unknown constants $\E{\mu^{N}(\si)}$ and $\E{N\mu^{N}(\si)}$. We show that these constants can be obtained as the solution of a system of two linear equations. These two equations lead to a unique solution for $\E{\mu^{N}(\si)}$ and $\E{N\mu^{N}(\si)}$. We derive them below. Expressions for the constants immediately follow.

\paragraph*{The case $p=1$.} Since the left-hand side of \eqref{eq:EPNPrelim} evaluates to one for $p=1$ and $F(1) = G(1) = 1-E(1) = 0$, we have for the right-hand side
\begin{equation*}
\lim_{p\rightarrow 1} \big[\frac{F(p)\E{\mu^N(\si)}+G(p)\E{N\mu^N(\si)}}{1-E(p)}\big] = -\frac{F'(1)\E{\mu^N(\si)}+G'(1)\E{N\mu^N(\si)}}{E'(1)} = 1
\end{equation*}
by l'H\^opital's rule. Since $E(p)$, $F(p)$ and $G(p)$ are each differentiable at $p=1$, this results in the first linear equation in the two unknowns $\E{\mu^{N}(\si)}$ and $\E{N\mu^{N}(\si)}$.

\paragraph*{The case $p=\phi$.} The denominator $1-E(p)$ of \eqref{eq:EPNPrelim} has a root $p=\phi$ between zero and $\mu(\si) < 1$. More specifically, we have the following:
\begin{lemma}\label{lem:uniqueRootPhi}
The denominator $1-E(p)$ has exactly one root on the real line in the domain $(0,\mu(\si))$.
\end{lemma}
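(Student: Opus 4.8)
The plan is to get existence of a real root from the intermediate value theorem and uniqueness by reducing $1-E(p)=0$ to a one‑dimensional crossing problem. For existence: the numerator of $A(p)$ carries a factor $p$, so $A(0)=0$, hence $E(0)=0$ and $1-E(0)=1>0$. At the other endpoint, $\mu(\si)$ is a simple pole of $A$, because $\tfrac{\si}{\si+\la(1-p)}\,p\,(1-\tildeB(\si+\la(1-p)))$ does not vanish at $p=\mu(\si)$ while $\tildeB(\si+\la(1-p))-p$ has a simple zero there (its $p$‑derivative at $\mu(\si)$ equals $1-\la\E{B e^{-(\si+\la(1-\mu(\si)))B}}\ge 1-\rho>0$); thus $A(p)^2\to+\infty$ as $p\uparrow\mu(\si)$, while $\chi(\la(1-p))$ and $\tildeD(\la(1-p)+g(\la(1-p)))$ stay positive and finite, so $E(p)\to+\infty$ and $1-E(p)\to-\infty$. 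Since $\tildeB(\si+\la(1-p))-p<0$ on $(0,\mu(\si))$ (negative at $0$ and, by Tak\'acs' uniqueness of $\mu(\si)$ recalled above, vanishing only at $\mu(\si)$), $E$ is real‑analytic on $(0,\mu(\si))$, so the intermediate value theorem yields at least one root $\phi$.

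For uniqueness, write $\beta(p):=\tildeB(\si+\la(1-p))$. On $(0,\mu(\si))$ one has $\beta(p)>p$, so $A(p)=-\tfrac{\si}{\si+\la(1-p)}\tfrac{p(1-\beta(p))}{\beta(p)-p}<0$, and since $E(p)=\chi(\la(1-p))A(p)^2\tildeD(\la(1-p)+g(\la(1-p)))$, taking square roots gives the equivalence
\begin{equation*}
1-E(p)=0 \quad\Longleftrightarrow\quad \frac{\beta(p)-p}{p\,(1-\beta(p))}=c(p),\qquad p\in(0,\mu(\si)),
\end{equation*}
where $c(p):=\tfrac{\si}{\si+\la(1-p)}\sqrt{\chi(\la(1-p))\,\tildeD(\la(1-p)+g(\la(1-p)))}$. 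The left‑hand side is positive and tends to $+\infty$ as $p\downarrow0$ and to $0$ as $p\uparrow\mu(\si)$. The right‑hand side is nonnegative and nondecreasing: this is where the hypothesis on $g$ enters. Since $g(0)=0$ and $g'$ is completely monotone, $g$ is nondecreasing (indeed a Bernstein function), so $\la(1-p)+g(\la(1-p))$ is nonincreasing in $p$ and hence $\tildeD$ of it is nondecreasing; multiplying by the nonnegative nondecreasing factors $\tfrac{\si}{\si+\la(1-p)}$ and $\chi(\la(1-p))$ gives $c$ nondecreasing. (For a convexity‑based variant the Bernstein property additionally makes $p\mapsto\tildeD(\la(1-p)+g(\la(1-p)))$, and thus $c$, completely monotone in $\la(1-p)$.) Consequently, a unique root will follow once the left‑hand side is shown to be \emph{strictly decreasing}.

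Strict monotonicity of the left‑hand side is the crux. Differentiating, $\tfrac{d}{dp}\big[(\beta(p)-p)/(p(1-\beta(p)))\big]$ has the sign of $p(1-p)\beta'(p)-\beta(p)(1-\beta(p))$, so the claim reduces to the inequality
\begin{equation*}
\beta(p)\,\big(1-\beta(p)\big)\;\ge\;p\,(1-p)\,\beta'(p)\qquad\text{for }p\in(0,\mu(\si)),
\end{equation*}
which I expect to be the main obstacle: it fails in general on all of $(0,1)$ and needs three ingredients, namely (i) log‑convexity of the LST $\tildeB$, which makes $-\tildeB'(\theta)/\tildeB(\theta)$ nonincreasing with value $\E{B}$ at $\theta=0$, hence $\beta'(p)\le\la\E{B}\,\beta(p)=\rho\,\beta(p)$; (ii) the stability bound $\rho<1$; and (iii) that $p<\mu(\si)$ forces $p<\beta(p)<\mu(\si)<1$. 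By (i) it suffices to prove $\rho\,p(1-p)\le 1-\beta(p)$ on $(0,\mu(\si))$, which I would obtain by bounding $\beta$ from above using its convexity together with the identity $\beta(\mu(\si))=\mu(\si)$ (a chord bound for $\tildeB$ over the relevant range of arguments) and comparing the resulting affine lower bound for $1-\beta(p)$ with the parabola $\rho\,p(1-p)$ at its single interior critical point. With the left‑hand side strictly decreasing from $+\infty$ to $0$ and the right‑hand side nonnegative and nondecreasing, the two graphs meet exactly once on $(0,\mu(\si))$, so $1-E(p)$ has exactly one root there.
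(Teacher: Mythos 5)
Your overall architecture is sound and, after unwinding, coincides with the paper's: since $A(p)<0$ on $(0,\mu(\si))$, your identity $\sqrt{E(p)}=c(p)/f(p)$ with $f(p)=\frac{\beta(p)-p}{p(1-\beta(p))}$ shows that ``$c$ nondecreasing and $f$ strictly decreasing'' is just a repackaged proof that $E(p)$ is strictly increasing, which is exactly how the paper argues (it shows each factor $\chi(\la(1-p))$, $A^2(p)$, $\tildeD(\la(1-p)+g(\la(1-p)))$ is increasing). Your existence step and your monotonicity argument for $c(p)$ are correct. The problem is the crux you yourself flag: your proposed proof of $\beta(p)(1-\beta(p))\ge p(1-p)\beta'(p)$ does not go through. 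After invoking $\beta'\le\rho\beta$ you need $1-\beta(p)\ge\rho p(1-p)$, and you propose to get it by replacing $1-\beta$ with the affine lower bound coming from convexity of $\beta$ and the fixed point $\beta(\mu(\si))=\mu(\si)$. That affine bound genuinely drops below the parabola. Concretely, take $B$ exponential with rate $1$, $\la=0.9$, $\si=0.001$: then $\rho=0.9$, $\beta(0)=1/1.901\approx 0.526$, $\mu(\si)\approx 0.991$, and the chord gives $1-\beta(p)\ge 0.474-0.469\,p$, which at $p=0.76$ equals $0.117$, while $\rho p(1-p)=0.164$. (The true value $1-\beta(0.76)\approx 0.178$ still beats the parabola, but only barely, so no chord or endpoint comparison of the kind you describe can certify it; note also that $\rho p(1-p)$ is \emph{concave}, so lying above an affine minorant at finitely many points proves nothing in between.) As written, the uniqueness half of your proof is therefore incomplete.

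The inequality you need is nevertheless true, and it has a short direct proof that avoids both the log-convexity detour and the chord. From $1-e^{-x}\ge xe^{-x}$ for $x\ge 0$, taking expectations with $x=\theta B$ where $\theta=\si+\la(1-p)$ gives $1-\tildeB(\theta)\ge -\theta\,\tildeB'(\theta)$, i.e.
\begin{equation*}
1-\beta(p)\;\ge\;\frac{\si+\la(1-p)}{\la}\,\beta'(p)\;>\;(1-p)\,\beta'(p),
\end{equation*}
and multiplying by $\beta(p)>p$ (valid on $(0,\mu(\si))$) yields $\beta(p)(1-\beta(p))>p(1-p)\beta'(p)$ there. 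This is exactly the strict negativity of the numerator $p(1-p)\beta'(p)-\beta(p)(1-\beta(p))$ you computed for $f'$, so $f$ is strictly decreasing from $+\infty$ to $0$, and together with your (correct) monotonicity of $c$ the unique crossing follows. With this substitution your proof is complete and is, in my view, cleaner than the paper's term-by-term differentiation of $A(p)$.
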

\begin{proof}
See Appendix \ref{proofLemmaDenominator}.
\end{proof}
Let $\phi$ be the unique root mentioned in Lemma \ref{lem:uniqueRootPhi}. Since $\E{p^N}$ is analytic in $p$ for $|p|\le 1$ and thus cannot evaluate to $\pm \infty$ for $0 < p < \mu(\si)$, we have that this root should also be a root for the numerator. Hence, we have that $F(\phi)\E{\mu^N(\si)}+G(\phi)\E{N\mu^N(\si)} = 0$.
 
Combining \eqref{eq:EPNPrelim} with the cases $p=1$ and $p=\phi$, we obtain the following lemma:

\begin{lemma}\label{lem:EPN}
The PGF of the queue length at the beginning of an arbitrary uptime is given by
\begin{equation*}
\E{p^N} =\frac{F(p)\E{\mu^N(\si)}+G(p)\E{N\mu^N(\si)}}{1-E(p)},
\end{equation*}
where
\begin{equation*}
\E{\mu^N(\si)} = \frac{E'(1)G(\phi)}{F(\phi)G'(1)-F'(1)G(\phi)} \mbox{  and   } \E{N\mu^N(\si)} = \frac{E'(1)F(\phi)}{F'(1)G(\phi)-F(\phi)G'(1)}.
\end{equation*}
\end{lemma}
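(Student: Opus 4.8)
The plan is to treat Lemma \ref{lem:EPN} as the bookkeeping step that closes the analysis: the functional form of $\E{p^N}$ is already in hand from \eqref{eq:EPNPrelim}, so all that remains is to pin down the two unknown constants $\E{\mu^N(\si)}$ and $\E{N\mu^N(\si)}$, and I would do this by turning the two ``boundary'' observations made just before the statement into a pair of linear equations and then solving them. Throughout write $x:=\E{\mu^N(\si)}$ and $y:=\E{N\mu^N(\si)}$.

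First I would record the two equations. At $p=1$ the left-hand side of \eqref{eq:EPNPrelim} equals $1$ while $F(1)=G(1)=1-E(1)=0$, and $F,G,1-E$ are differentiable there, so l'H\^opital's rule gives
\[
F'(1)\,x+G'(1)\,y=-E'(1).
\]
For the second equation, let $\phi\in(0,\mu(\si))$ be the root furnished by Lemma \ref{lem:uniqueRootPhi}. Since $\E{p^N}=\sum_{n\ge0}\Pro{N=n}\,p^n$ has radius of convergence at least $1$, it is analytic, hence finite, at the interior point $\phi$; as $1-E(\phi)=0$, the numerator of \eqref{eq:EPNPrelim} must vanish at $\phi$, i.e.
\[
F(\phi)\,x+G(\phi)\,y=0.
\]
(Here $F(\phi)$ and $G(\phi)$ are finite: although $F$ and $G$ carry the factor $K(p)$, whose denominator $p-\tildeB(\si+\la(1-p))$ vanishes at $\mu(\si)$, that denominator has no zero on $(0,\mu(\si))$ because $\mu(\si)$ is the unique root of $p-\tildeB(\si+\la(1-p))$ in $|p|<1$.)

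Solving this $2\times2$ system --- by Cramer's rule, say --- produces exactly the two expressions displayed in the lemma; substituting them back into \eqref{eq:EPNPrelim} completes the proof. The only step that is not pure routine is the non-degeneracy of the system, i.e.\ $\Delta:=F'(1)G(\phi)-F(\phi)G'(1)\neq0$, which is what makes the solution unique. I would reduce it to the single claim $E'(1)\neq0$: if the coefficient matrix were singular its rows would be proportional, say $(F'(1),G'(1))=\kappa\,(F(\phi),G(\phi))$, and then the first equation would read $\kappa\big(F(\phi)x+G(\phi)y\big)=-E'(1)$, that is $0=-E'(1)$ by the second equation --- a contradiction. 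Thus the main, and essentially only, obstacle is to verify $E'(1)\neq0$; I expect this to follow from a short differentiation of $E(p)$ in \eqref{eq:Ep} at $p=1$ (using $g(0)=0$, $\chi(0)=\tildeD(0)=A(1)=1$, the moment identities \eqref{eq:momentsD}, and the stability condition $\rho<\E{U}/(\E{U}+\E{D})$), and this same computation shows that $1-E(p)$ has a simple zero at $p=1$, which is precisely what licenses the single use of l'H\^opital above.
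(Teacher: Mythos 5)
Your proposal is correct and follows essentially the same route as the paper: the two linear equations obtained from the case $p=1$ (via l'H\^opital) and the case $p=\phi$ (via analyticity of $\E{p^N}$ at the interior root of $1-E(p)$) are exactly the paper's two cases, and solving the resulting $2\times 2$ system gives the displayed constants. Your extra discussion of non-degeneracy, reduced to $E'(1)\neq 0$, addresses a point the paper merely asserts (``these two equations lead to a unique solution''), so it is a small bonus rather than a divergence.
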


\subsection{The queue length distribution at an arbitrary point in time}\label{obtainingEPL}
The main goal of this section is to determine the (PGF of the) queue length distribution at an arbitrary point in time. To do so, we expand the results of the previous section. The PGF $\E{p^M}$ of the queue length at the start of an arbitrary downtime is easily derived from the PGF $\E{p^N}$ of the queue length at the start of an arbitrary uptime. We then obtain the PGFs of the queue length when observed at an arbitrary point within an uptime and when observed at an arbitrary point within a downtime respectively. As a result, we finally obtain a general expression for $\E{p^L}$, the PGF of the queue length at an arbitrary point in time.

\subsubsection{Observing the queue length during an arbitrary uptime}

For the distribution of the queue length at an arbitrary point during an arbitrary uptime, we first obtain an expression for $\E{p^M}$. Then, we show that the PGF of the desired distribution equals this expression. 

Following the same reasoning as in Sections \ref{periodDK} and \ref{periodDKplusOne}, we derive $\E{p^M}$ by noting that during a downtime $D$ between $M$ and $N$, new customers arrive, but no customers are being processed:
\begin{align}
\E{p^{N}}    	 =& \int_{t=0}^\infty \E{p^{M}}\E{p^{H(t)}} d\Pro{D<t} \notag \\
							 =& \E{p^{M}}\tildeD(\la(1-p)) \label{eq:NinM},
\end{align}
where $H(t)$ is the number of Poisson ($\la$) arrivals during a time interval $t$. Thus, the following lemma can be derived:

\begin{lemma}\label{QLServerUp}
The PGF of the queue length at an arbitrary point in an uptime is given by
\begin{equation*}
\E{p^L|\textit{server up}} = \frac{\E{p^N}}{\tildeD(\la(1-p))},
\end{equation*}
where $\E{p^N}$ is given in Lemma \ref{lem:EPN} and $\tildeD(\cdot)$ satisfies \eqref{eq:identityD} and \eqref{eq:infprodform}.
\end{lemma}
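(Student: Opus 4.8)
The plan is to prove the lemma in two steps: first pin down $\E{p^M}$, the PGF of the queue length at the start of an arbitrary downtime (equivalently, at the end of an arbitrary uptime), and then show that observing the queue at a uniformly random epoch \emph{inside} an uptime yields exactly the same PGF.

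For the first step I would simply read off \eqref{eq:NinM}: since $\E{p^N} = \E{p^M}\,\tildeD(\la(1-p))$ and $\tildeD(\la(1-p)) \neq 0$ for $p$ in a neighbourhood of $1$ (indeed $|\tildeD(\la(1-p))| > 0$ whenever $\Re(\la(1-p)) > 0$), we obtain $\E{p^M} = \E{p^N}/\tildeD(\la(1-p))$, the identity extending to $|p|\le 1$ with $\tildeD(\la(1-p))\neq 0$ by analyticity. It therefore remains to identify the target PGF with $\E{p^M}$.

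For the second step I would use a renewal--reward argument over the alternating sequence of up- and down-periods. During the $k$-th uptime the machine behaves as an ordinary M/G/1 queue (same $\la$, same $B$, no interruptions) started from queue length $N(k)$; write $V_k(\cdot)$ for this queue-length path on $[0,U(k)]$, so that $M(k) = V_k(U(k))$. The crucial structural facts are that the breakdown clock $U(k)$ is $\exp(\si)$-distributed and independent of the whole path $V_k(\cdot)$ (hence of $N(k)$ and of the arrivals and services occurring during the uptime). Renewal--reward then gives
\[
\E{p^L|\textit{server up}} \;=\; \frac{\E{\int_0^{U} p^{V(t)}\,dt}}{\E{U}} \;=\; \si\int_0^\infty e^{-\si t}\,\E{p^{V(t)}}\,dt,
\]
where the middle step conditions on the path $V(\cdot)$ and uses $\Pro{U>t}=e^{-\si t}$ together with $\E{U}=1/\si$. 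On the other hand, conditioning on $V(\cdot)$ and using again that $U\sim\exp(\si)$ is independent of it,
\[
\E{p^{M}} \;=\; \E{p^{V(U)}} \;=\; \int_0^\infty \si e^{-\si t}\,\E{p^{V(t)}}\,dt,
\]
which is the very same integral. Combining this with the first step yields $\E{p^L|\textit{server up}} = \E{p^M} = \E{p^N}/\tildeD(\la(1-p))$. A slicker phrasing of the same observation: by memorylessness of $U$, the age of the current uptime seen at a random epoch is again $\exp(\si)$ and independent of the queue dynamics, so the queue length observed there is distributed as $V(U')$ with $U'\deq U$, i.e.\ as $M$.

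The argument is short and there is no genuinely hard step; the point needing care is the justification that a random epoch inside an uptime effectively samples the M/G/1 path at an \emph{independent} $\exp(\si)$ time --- that is, correctly forming the renewal--reward ratio and invoking (a) independence of the breakdown clock from the queue evolution and (b) the exponential inspection-paradox identity that the backward recurrence time of an $\exp(\si)$ renewal sequence is itself $\exp(\si)$. One should also note that the queue length $N(k)$ at the start of the length-biased uptime carries the ordinary stationary law of $N$, with no length-biasing, precisely because uptime lengths are i.i.d.\ and independent of $N(k)$.
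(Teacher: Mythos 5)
Your proof is correct and reaches the result by the same two-step structure as the paper: first $\E{p^M}=\E{p^N}/\tildeD(\la(1-p))$ from \eqref{eq:NinM}, then the identification $\E{p^L|\textit{server up}}=\E{p^M}$. The difference lies in how you justify the second step. The paper invokes the conditional PASTA property of \cite{conditionalPasta}, applied to the doubly stochastic process of vacation initiations (rate $\si$ during uptimes, rate $0$ during downtimes): the queue length seen at vacation-initiation epochs, i.e.\ $M$, then coincides in law with the queue length at an arbitrary instant of an uptime. You instead prove this identity by hand, exploiting that $U\sim\exp(\si)$ is independent of the M/G/1 path $V(\cdot)$ run during the uptime, so that both $\E{\int_0^U p^{V(t)}\,dt}/\E{U}$ and $\E{p^{V(U)}}$ reduce to the same integral $\si\int_0^\infty e^{-\si t}\E{p^{V(t)}}\,dt$. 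Your version is more elementary and self-contained but is tied to the exponential uptime; the paper's citation covers the same ground via a general theorem. One point you should state more carefully: the cycles $(U(k),D(k))$ are \emph{not} i.i.d.\ here (the downtimes are one-dependent), so the classical i.i.d.\ renewal--reward theorem does not apply verbatim; the time-average ratio $\E{\int_0^{U} p^{V(t)}\,dt}/\E{U}$ must be justified by the ergodic/Palm-calculus version of that argument (as the paper does for $p_{up}$ and $p_{down}$ in Theorem \ref{thm:EPL}). Since the uptimes themselves are i.i.d.\ exponential and independent of everything else, your key independence step and the conclusion are unaffected, so this is a presentational gap rather than a mathematical one.
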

\begin{proof}
Let $V(t)$ be the number of vacation initiations of the server in $(0,t]$. Note that $V(t)$ is a doubly stochastic process, where during a server uptime initiations of vacations occur according to a Poisson process with rate $\sigma$, whereas they obviously occur with rate zero when the server is already on a vacation. The conditional PASTA property (cf. \cite{conditionalPasta}) applied to $V(t)$ implies that the queue length distribution at the start of vacations equals the queue length distribution at an arbitrary point in time during an uptime. Hence, $\E{p^L|\textit{server up}} = \E{p^{M}}$. Combining this with \eqref{eq:NinM} yields the result.
\end{proof}

\begin{remark}
An expression for $\E{p^M}$ into $\E{p^N}$ is also readily given by \eqref{eq:periodUK}. This leads to an alternative expression for the PGF of the queue length when observed during a downtime:
\begin{equation*}
\E{p^L|\textit{server up}} = A(p)\E{p^N}+K(p)\E{\mu^N(\si)},
\end{equation*}
with $A(p)$, $K(p)$ and $\mu(\si)$ as before.
\end{remark}

\subsubsection{Observing the queue length during a downtime}
At an arbitrary point in time during a downtime, the number of customers in the system can be decomposed into the number of customers who were already waiting at the end of the previous uptime $M$, and the number of customers who arrived during the elapsed time $D^{past}$ since the start of the \emph{current} downtime, which we denote with $H(D^{past})$. Note that $M$ and $H(D^{past})$ are not independent. A large value of $M$ may imply that the \emph{previous} downtime has been very long. 
Due to the positive correlation between the downtimes as assumed in both models, this would in its turn imply that the \emph{current} downtime is probably longer than usual as well. The \emph{current} downtime and its past time $D^{past}$ are obviously dependent, which results in the fact that $M$ and $H(D^{past})$ are dependent. Using the notation illustrated in Figure \ref{fig:queueLength}, we obtain
\begin{align}
\E{p^L|\textit{server down}} &= \E{p^{M+H(D^{past})}}  \notag \\
&= \lim_{k\rightarrow\infty} \int_{0}^\infty \E{p^{M(k+1)}|D(k)=t}\E{p^{H(D^{past}(k+1))}|D(k) = t} d\Pro{D(k)<t}. \label{eq:queueLengthServerDown}
\end{align}
From the intermediate calculations leading to \eqref{eq:NkplusTwoInNk} (or by simply combining \eqref{eq:periodDKplusOne} and \eqref{eq:NkPlusTwo}), we have that
\begin{equation}\label{eq:form}
\lim_{k\rightarrow \infty} \E{p^{M(k+1)}|D(k) = t} = \sum_{i=1}^2 q_i(p)e^{-r_i(p)t},
\end{equation}
where
\begin{align}
q_1(p) =& A(p)(A(p)\E{p^N}+K(p)\E{\mu^N(\si)}), \\
q_2(p) =& K(p) \Big(\Big(\lim_{z\rightarrow\mu(\si)}[A(z)+K(z)]\Big)\E{\mu^N(\si)} \\
&\qquad\qquad\qquad\qquad+ \frac{\si (1-\mu(\si))}{(\si+\la(1-\mu(\si)))(1+\la \tildeB'(\si+\la(1-\mu(\si))))}\E{N\mu^{N}(\si)}\Big), \\
r_1(p) =& \la(1-p) \textit{ and } r_2(p)=\la(1-\mu(\si)).
\end{align}
Moreover, from \eqref{eq:combedecomposition} we obtain
\begin{align}
\lim_{k\rightarrow\infty} \E{p^{H(D^{past}(k+1))}|D(k) = t} &=\E{e^{-\la(1-p)D^{past}(k+1)}|D(k) = t} \notag \\ 
&= \frac{1-\E{e^{-\la(1-p)D(k+1)}|D(k) = t}}{\la(1-p)\E{D(k+1)|D(k)=t}} \notag \\
&= \frac{1-\chi(\la(1-p))e^{-g(\la(1-p))t}}{\la(1-p)(g'(0)t - \chi'(0))}. \label{eq:PGFHDPast}
\end{align}
Combining \eqref{eq:form}--\eqref{eq:PGFHDPast}, we have that the evaluation of \eqref{eq:queueLengthServerDown} involves the computation of a linear combination of integrals with the form 
\begin{equation*}
\int_{t=0}^\infty\frac{e^{-at}}{bt+c} d\Pro{D<t} = \int_{t=0}^\infty \int_{u=0}^\infty e^{-(at+(bt+c)u)} du \ d\Pro{D<t}.
\end{equation*}
By interchanging the integrals, this expression reduces to 
\begin{equation*}
\int_0^\infty e^{-cu}\tildeD(a+bu) du =: \kappa_{[a, b]}\{c\},
\end{equation*}
i.e. the Laplace transform of the function $\gamma_{[a,b]}\{u\} := \tildeD(a+bu)$. We obtain the following lemma:

\begin{lemma}\label{QLServerDown}
The PGF of the queue length at an arbitrary point in a downtime is given by
\begin{align}
\E{p^L|\textit{server down}} =& \int_{t=0}^\infty \sum_{i=1}^2 q_i(p)e^{-r_i(p)t}\frac{1-\chi(\la(1-p))e^{-g(\la(1-p))t}}{\la(1-p)(g'(0)t - \chi'(0))} d\Pro{D<t}    \notag\\
=& \frac{1}{\lambda(1-p)} \sum_{i=1}^2 q_i(p)\Big(\kappa_{[r_i(p), g'(0)]}\{-\chi'(0)\}\notag \\
&\qquad\qquad \qquad\qquad -\chi(\la(1-p))\kappa_{[r_i(p)+g(\la(1-p)), g'(0)]}\{-\chi'(0)\}\Big),
\end{align}
where $\gamma_{[a,b]}\{u\} = \tildeD(a+bu)$ and $\kappa_{[a, b]}\{c\} = \int_0^\infty e^{-cu}\tildeD(a+bu) du$, the Laplace transform of $\gamma_{[a,b]}\{\cdot\}$.
\end{lemma}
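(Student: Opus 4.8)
The plan is to assemble the conditional transforms collected in the paragraphs preceding the statement and then to justify the two analytic manipulations that convert the resulting integrals into Laplace transforms of $\gamma_{[a,b]}\{\cdot\}$. I would start from \eqref{eq:queueLengthServerDown}, i.e.\ from the decomposition of the queue length at an arbitrary epoch of a downtime into $M(k+1)+H(D^{past}(k+1))$ together with the conditioning on $D(k)=t$ that decouples the two summands, and substitute into it the two transforms already derived: the form \eqref{eq:form}, $\lim_{k\to\infty}\E{p^{M(k+1)}\mid D(k)=t}=\sum_{i=1}^2 q_i(p)e^{-r_i(p)t}$, which is obtained by dividing \eqref{eq:NkPlusTwo} through by the factor $\chi(\la(1-p))e^{-g(\la(1-p))t}$ appearing in \eqref{eq:periodDKplusOne} and then letting $k\to\infty$ with the help of Lemma \ref{lem:EPN}; and the stationary‑excess transform \eqref{eq:PGFHDPast}, in which the identity $\E{e^{-sD^{past}(k+1)}\mid D(k)=t}=(1-\E{e^{-sD(k+1)}\mid D(k)=t})/(s\,\E{D(k+1)\mid D(k)=t})$ encodes that, conditionally on its length, the elapsed part of the current downtime is uniform over it, while $\E{e^{-sD(k+1)}\mid D(k)=t}=\chi(s)e^{-g(s)t}$ comes from \eqref{eq:combedecomposition} and $\E{D(k+1)\mid D(k)=t}=g'(0)t-\chi'(0)$ comes from differentiating \eqref{eq:combedecomposition} at $s=0$ (using $\chi(0)=1$, $g(0)=0$). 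Passing to the limit $k\to\infty$ inside the integral — routine, since the integrands are uniformly bounded in modulus for $|p|\le1$ and $D(k)\Rightarrow D$ — then yields precisely the first displayed line of the lemma.

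Next I would expand the numerator $1-\chi(\la(1-p))e^{-g(\la(1-p))t}$ and split the resulting integral into the four terms of the generic shape $\int_{0}^{\infty} e^{-at}/(bt+c)\,d\Pro{D<t}$, with $b=g'(0)$, $c=-\chi'(0)$, and $a$ equal to $r_i(p)$ or to $r_i(p)+g(\la(1-p))$ for $i=1,2$. Since $c=-\chi'(0)>0$ (which follows from $\E{D}>0$ via \eqref{eq:momentsD}) and $g'(0)\ge0$, one has $bt+c>0$ for every $t\ge0$, so $1/(bt+c)=\int_{0}^{\infty} e^{-(bt+c)u}\,du$; applying Tonelli to interchange the two integrations (the integrand is nonnegative after this substitution) turns each term into $\int_{0}^{\infty} e^{-cu}\big(\int_{0}^{\infty} e^{-(a+bu)t}\,d\Pro{D<t}\big)\,du=\int_{0}^{\infty} e^{-cu}\tildeD(a+bu)\,du=\kappa_{[a,b]}\{c\}$. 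Collecting the four contributions and pulling out the common factor $1/(\la(1-p))$ then gives the claimed closed form.

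The algebra of substituting \eqref{eq:form} and \eqref{eq:PGFHDPast} and re‑indexing the four integrals is pure bookkeeping; the two points that genuinely need care are (i) the decomposition \eqref{eq:queueLengthServerDown} itself together with the length‑biased description of $D^{past}$ given $D(k)=t$ — in particular, why integrating the conditional excess transform against the (non length‑biased) law of $D(k)$ is the correct object for an arbitrary down‑epoch — and (ii) the two interchanges (limit with integral, and the Tonelli step), the latter being licensed exactly by $g'(0)t-\chi'(0)$ staying bounded away from $0$. I expect (i) to be the main conceptual obstacle; once it is in place, the remainder is a direct computation.
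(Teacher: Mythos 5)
Your proposal retraces the paper's own derivation step for step --- substitute \eqref{eq:form} and \eqref{eq:PGFHDPast} into \eqref{eq:queueLengthServerDown}, expand the numerator $1-\chi(\la(1-p))e^{-g(\la(1-p))t}$, and convert each term $\int_0^\infty e^{-at}/(bt+c)\,d\Pro{D<t}$ into $\kappa_{[a,b]}\{c\}$ via $1/(bt+c)=\int_0^\infty e^{-(bt+c)u}\,du$ and Tonelli --- so on the computational side there is nothing to add, and your reading of how \eqref{eq:form} follows from \eqref{eq:NkPlusTwo} and \eqref{eq:periodDKplusOne} is correct.

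The problem is the step you yourself single out as point (i) and then leave open. That is not a presentational gap: it is the crux, and the natural attempt to close it does not deliver the displayed formula. A uniformly chosen down-epoch samples the covering downtime $D(k+1)$ in a length-biased way, and because $D(k+1)$ depends on $D(k)$ this bias propagates to the previous downtime: the correct weight for $D(k)$ at such an epoch is $\frac{\E{D(k+1)\mid D(k)=t}}{\E{D}}\,d\Pro{D(k)<t}$, not the unbiased $d\Pro{D(k)<t}$ used in \eqref{eq:queueLengthServerDown}. Multiplying that weight by the conditional excess transform \eqref{eq:PGFHDPast} cancels the factor $g'(0)t-\chi'(0)$, and the Palm/renewal-reward computation gives
\begin{equation*}
\E{p^L|\textit{server down}}=\frac{\E{\int_0^{D(k+1)}p^{M(k+1)+H(u)}\,du}}{\E{D}}
=\frac{1}{\la(1-p)\E{D}}\sum_{i=1}^2 q_i(p)\Big(\tildeD(r_i(p))-\chi(\la(1-p))\,\tildeD\big(r_i(p)+g(\la(1-p))\big)\Big),
\end{equation*}
i.e.\ the $t$-dependent denominator $g'(0)t-\chi'(0)$ inside the integral should be the constant $\E{D}$ outside it (and then no $\kappa$-transforms are needed at all). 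The two expressions coincide only when $g'(0)=0$, the case of independent downtimes. So your instinct about (i) is exactly right, but as written your argument --- like the derivation it follows --- does not establish the formula in the statement; it establishes it only after this normalisation is repaired. The subsequent Tonelli step, for which you correctly note that $-\chi'(0)\ge 0$ and $g'(0)\ge 0$ keep $bt+c$ positive, is fine but becomes moot once the denominator is constant.
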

Note that in case $\tildeD(\cdot)$ is not explicitly known by inspecting \eqref{eq:identityD}, one can still evaluate $\kappa_{[a, b]}\{c\}$ up to arbitrary precision by truncating the infinite product form in \eqref{eq:infprodform}.

\subsubsection{Deriving the general queue length distribution}
Now that we have the PGF of $L$ conditioned on the state of the server, we readily have the queue length distribution of the single server model:

\begin{theorem}\label{thm:EPL}
For the PGF of the queue length in the single server model with one-dependent downtimes, we have
\begin{equation}\label{eq:EPL}
\E{p^L} = p_{up} \E{p^L|\textit{server up}} + p_{down} \E{p^L|\textit{server down}},
\end{equation} 
where
\begin{equation*}
p_{up} = \frac{\E{U}}{\E{U}+\E{D}} = \frac{1}{1+\si\E{D}} \textit{ and } p_{down} = \frac{\E{D}}{\E{U}+\E{D}} = \frac{\si\E{D}}{1+\si\E{D}}.
\end{equation*}
\end{theorem}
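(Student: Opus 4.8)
The plan is to obtain \eqref{eq:EPL} by a single application of the law of total probability: tag the stationary queue length $L$ according to whether the epoch at which it is observed falls inside an uptime or inside a downtime, and then invoke Lemmas \ref{QLServerUp} and \ref{QLServerDown} for the two resulting conditional PGFs.

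First I would record that $L$ is well-defined: under the stability condition $\rho<\E{U}/(\E{U}+\E{D})$ the queue settles into a stationary regime, and the system alternates between the uptimes $U(k)$ and downtimes $D(k)$, the sequence of up/down cycles $(U(k),D(k))$ becoming stationary as $k\to\infty$. Since the $U(k)$ are i.i.d.\ exponential$(\si)$ and the $D(k)$ are one-dependent, this stationary cycle sequence is one-dependent, hence ergodic. The ergodic (renewal-reward type) theorem applied to the reward ``time spent with the server up per cycle'' then gives that the long-run fraction of time the server is up equals $\E{U}/\E{U+D}=\E{U}/(\E{U}+\E{D})$, and likewise the fraction of time down equals $\E{D}/(\E{U}+\E{D})$; using $\E{U}=1/\si$ turns these into the expressions $p_{up}$ and $p_{down}$ claimed in the statement.

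Next, by the same ergodic argument the event ``the observation epoch lies in an uptime'' has probability $p_{up}$ and ``lies in a downtime'' has probability $p_{down}$, and conditionally on these events the law of $L$ is exactly the one analysed in Lemma \ref{QLServerUp} (server up) and Lemma \ref{QLServerDown} (server down). Writing $\E{p^L}=\Pro{\textit{server up}}\,\E{p^L|\textit{server up}}+\Pro{\textit{server down}}\,\E{p^L|\textit{server down}}$ and substituting the two lemmas yields \eqref{eq:EPL}. The only step needing genuine care is the passage from the non-i.i.d.\ (merely one-dependent) cycle structure to the time-stationary statements above: the standard renewal-reward theorem does not apply verbatim, but ergodicity of a stationary one-dependent sequence repairs this at once. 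Everything else --- matching the conditional PGFs to the two lemmas and simplifying $p_{up},p_{down}$ via $\E{U}=1/\si$ --- is immediate.
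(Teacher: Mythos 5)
Your proposal is correct and follows essentially the same route as the paper: condition on whether the observation epoch falls in an uptime or a downtime, identify $p_{up}$ and $p_{down}$ as the long-run fractions of time given by $\E{U}/(\E{U}+\E{D})$ and $\E{D}/(\E{U}+\E{D})$, and substitute Lemmas \ref{QLServerUp} and \ref{QLServerDown} for the conditional PGFs. The paper justifies the time-fraction step by a citation to Palm theory for stationary (non-i.i.d.) cycle structures, which is exactly the point your ergodicity-of-one-dependent-sequences remark addresses.
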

Expressions for $\E{p^L|\textit{server up}}$ and $\E{p^L|\textit{server down}}$ are given in Lemmas \ref{QLServerUp} and \ref{QLServerDown} respectively.  The weights $p_{up}$ and $p_{down}$ are the probabilities that one finds the server up and down respectively when observing the system at a random point in time in steady-state.
These probabilities are derived through the straightforward application of Palm theory (cf. \cite{BaccelliBremaud, Serfozo}) and involve the computation of $\E{U}$ and $\E{D}$. The former is determined by the fact that $U$ is exponentially ($\sigma$) distributed, and the latter follows from \eqref{eq:momentsD}.

The obtained expression for the (PGF of the) queue length distribution is exact for the single server vacation model with server vacation times dependent according to \eqref{eq:combedecomposition}. It can serve as an approximation for the marginal queue length distribution of a first-layer queue in the layered model, which is examined in Section \ref{approx}. We end this section with two remarks. 

\begin{remark}\label{rem:infprodform} Observe that the evaluation of \eqref{eq:EPL} involves the evaluation of several values of the downtime LST $\tildeD(\cdot)$. Whenever the downtime LST is not readily derived by \eqref{eq:identityD}, computing the values of $\tildeD(\cdot)$ is not possible in an exact fashion. However, we can use the infinite product form representation \eqref{eq:infprodform} to derive these values up to arbitrary precision. This product converges geometrically fast (as is common for such recursions as \eqref{eq:identityD}, which often arise in vacation-type models; see also \cite{BoxmaCombe}) and therefore truncation leads to an arbitrarily accurate approximation. The numerical experiments in Section \ref{numStudy} also confirm this fast convergence.
\end{remark}

\begin{remark} The analysis of the single server queue as presented in this section can be extended to other dependence forms than \eqref{eq:combedecomposition}. For  example, for Markov-modulated dependencies the same strategy can be used to obtain queue length distributions. Slight adaptations have to be made in the computations, starting with the conditional LST term in \eqref{eq:periodDKplusOne}. 
\end{remark}

\subsection{A note on the impact of dependence}\label{effectDependence}

Now that we have obtained the PGF of the queue length, we numerically study the influence of the downtime dependence on the queue length distribution. We will show that the level of dependence between the downtimes influences the queue length distribution considerably. Observe an instance of the single server model where $\lambda$ = 3, the service time $B$ is exponentially distributed with rate 5, and the uptime $U$ of the server  is exponentially distributed with rate 1/3. In this particular example, the downtime of the server consists of multiple exponential phases. The number of phases of which a downtime $D(k+1)$ consists, depends on the previous downtime $D(k)$:
\begin{equation}\label{eq:exampledowntimes}
D(k+1) \deq C_1 + \dots + C_{J(D(k))+1},
\end{equation}
where the $C_i$, which represent the phases, are i.i.d. exponentially ($\delta$) distributed, $\delta > 1$, and $J(D(k))$ is Poisson di\-stri\-bu\-ted with parameter $D(k)$. This implies that 
\begin{align*}
\E{e^{-sD(k+1)}|D(k) = t} &= \sum_{j=0}^\infty \E{e^{-s(C_1 + \sum_{i=2}^{j+1}C_i)}} e^{-t} \frac{t^j}{j!} \\
&= \E{e^{-sC_1}} \sum_{j=0}^\infty \E{e^{-sC_1}}^j e^{-t} \frac{t^j}{j!} \\
&= \E{e^{-sC_1}}e^{-(1-\E{e^{-sC_1}})t}.
\end{align*}
Therefore, we have that $\chi(s) = \E{e^{-sC_1}} = \frac{\delta}{\delta+s}$ and $g(s) = 1-\E{e^{-sC_1}} = \frac{s}{\delta+s}$. 
The stationary downtime is exponentially ($\de-1$) distributed, since \eqref{eq:identityD} is satisfied for its LST $\tildeD(s) = \frac{\delta-1}{\delta-1+s}$.  Observe that the stationary downtime distribution only exists for $\de>1$.

We compare the model above with its `independent counterpart', namely a single server queue with the same interarrival, service, uptime and stationary downtime distributions as before, but with mutually independent downtimes. The independent downtimes also fit in the dependence structure of \eqref{eq:combedecomposition} by simply setting $g(s) = 0$ for all $s$. Since the stationary downtime distribution is exponentially $(\de-1)$ distributed, we trivially have for the independent model that  $\chi(s) = \tildeD(s) = \frac{\de-1}{\de-1+s}$, $g(s) = 0$.

To see the effect of the dependencies, we compare the expected queue length in the dependent model, $\E{L^{dep}}$, with that of the independent model, $\E{L^{indep}}$. These values are obtained by evaluating the derivative of (\ref{eq:EPL}) at $p=1$. We compute the percentual relative difference of both quantities, i.e.
\begin{equation*}
\Delta := 100\% \times \frac{\E{L^{dep}}-\E{L^{indep}}}{\E{L^{indep}}},   
\end{equation*}
for varying values of $\delta$ such that the load of the system varies between 0.6 and 1. For the dependent model, the value of $\delta$ determines the correlation coefficient between two consecutive downtimes in steady-state, which we denote by $r$. More specifically, we have by the definition of the correlation coefficient that
\begin{equation}
r = \frac{\lim_{k \rightarrow \infty} \E{D(k)D(k+1)} - (\E{D})^2}{\E{D^2}-(\E{D})^2}.
\end{equation}
This expression can be further expressed in terms of $\delta$ by using \eqref{eq:momentsD} and \eqref{eq:jointDExpectation}. For the independent model, the correlation coefficient between the downtimes obviously equals zero at all times.
\begin{figure}
\begin{center}    
\includegraphics[width=\figurewidth]{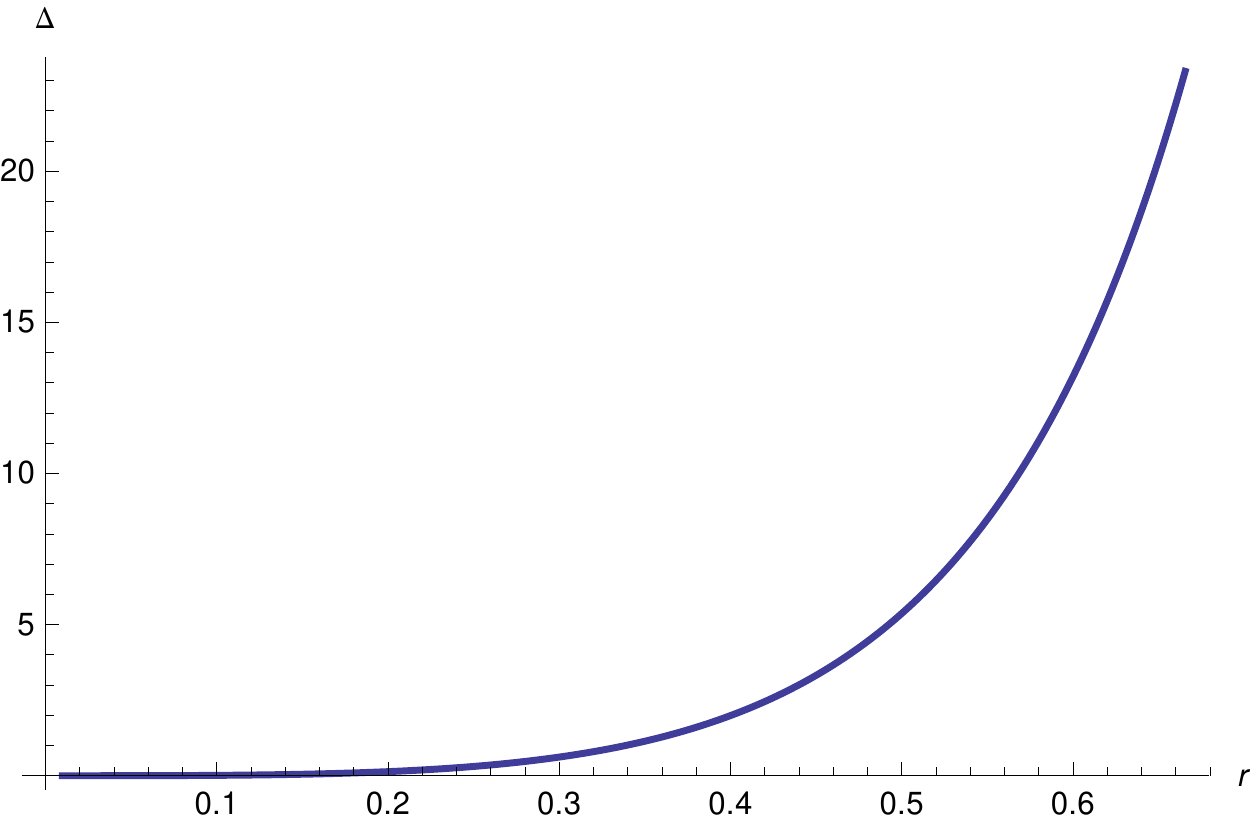}
\caption{The relative difference $\Delta$ in $\E{L}$ between the dependent and the independent model for various values of the correlation coefficient $r$.}
\label{fig:impactDowntime}
\end{center}
\end{figure}
Figure \ref{fig:impactDowntime} shows the value of $\Delta$ as a function of the correlation $r$ as observed in the dependent model. We see in this figure that $\Delta$ equals zero for $r=0$, while $\Delta$ grows as high as 25\% for increasing $r$. Thus, if we would approximate the mean queue length in the dependent model by ignoring the correlation between the downtimes, we can have an error of 25\%. This figure shows that the correlation in the downtimes can have a large impact on the queue length.

\section{Approximating queue lengths in the two-layered model} \label{approx}
In this section, we use Theorem \ref{thm:EPL} to obtain approximations for the marginal queue length distributions in the layered model. Arrival streams, service times and uptimes are equivalent for both models. To describe the downtime distribution and the dependence of the downtimes in the layered model in terms of the parameters of the single server model as well as possible, we need to obtain suitable choices for the functions $\chi(s)$ and $g(s)$, which are used in \eqref{eq:combedecomposition}. When investigating $L_i$, the queue length of $Q_i$, we choose functions $\chi_i(s)$ and $g_i(s)$ that are specific to $M_i$, $i=1,2$. The resulting explicit downtime structure matches the downtime distribution and downtime dependence of the downtimes of $M_i$ in the layered model closely, but does not model it exactly. Thus plugging in these suitable choices for $\chi_i(s)$ and $g_i(s)$ into \eqref{eq:EPL} only yields an \emph{approximation} for the distribution of $L_i$. However, numerical results in Section \ref{numStudy} will show this approximation to be very accurate.

The accuracy of the approximation suggested by Theorem \ref{thm:EPL} depends on the quality of the choices for $\chi_i(s)$ and $g_i(s)$. Therefore, we first focus on how to choose these functions appropriately. For this purpose, we compute in Section \ref{computationCorr} the first two moments and the correlation coefficient of subsequent downtimes in the layered model.  Based on these numbers, we derive suitable choices for the functions $\chi_i(s)$ and $g_i(s)$ in Section \ref{choosefunctions}, such that they match the situation in the layered model as well as possible. After these preliminary steps, we combine these results with those of the previous section to obtain an approximation for (the PGF of) the distribution of $L_i$, one of the main results of this paper, in Section \ref{resultingApproximation}. This approximation is applicable for the layered model with two machines and one single repairman. However, the approach we follow remains valid for more general models. We discuss this in Section \ref{modelExtensions}.

\subsection{Moments and the correlation coefficient of downtimes in the layered model}\label{computationCorr}
In this section, we focus on exponential repair times. The analysis can be extended to phase-type repair times, but at the cost of more cumbersome expressions that offer little additional insight. We derive the first two moments of the stationary downtime distribution of machine $M_1$ in the layered model, as well as the correlation coefficient between two subsequent downtimes $D_1(k)$ and $D_1(k+1)$ in steady-state (i.e., for $k \rightarrow \infty$). We do this by studying the bivariate LST $\E{e^{-sD_1(k)-zD_1(k+1)}}$. Evidently, a downtime $D_1(k)$ can be decomposed into a waiting time $W_1(k)$ and a repair time $R_1(k)$. The waiting time $W_1(k)$ is either zero when $M_2$ is operational at the time of breakdown of $M_1$, or amounts to an exponentially ($\nu_2$) distributed residual of the repair time of $M_2$ otherwise.

Assume that the repairman repairs $M_1$ and $M_2$ at rate $\nu_1$ and $\nu_2$ respectively. As noted before, machines interfere with each other in the layered model through their downtimes. More specifically, we have that a lengthy repair time of $M_1$ may increase the waiting time in the next downtime of $M_2$. At the same time, a lengthy downtime for $M_2$ (which might be due to a long repair time) may have an increasing influence on the next waiting time of $M_1$. Therefore, we have that $R_1(k)$ and $W_1(k+1)$ are positively correlated. Keeping this in mind, the bivariate LST of two consecutive downtimes is
\begin{align}
\E{e^{-sD_1(k)-zD_1(k+1)}} =& \E{e^{-sW_1(k)}}\E{e^{-zR_1(k+1)}} \int_{0}^\infty e^{-sy}\E{e^{-zW_1(k+1)}|R_1(k) = y} \nu_1 e^{-\nu_1y} \,dy.  \label{eq:jointLST}
\end{align}
Since $R_1(k+1)$ is exponentially ($\nu_1$) distributed, only the terms $\E{e^{-sW_1(k)}}$ and $\E{e^{-zW_1(k+1)}|R_1(k) = y}$ remain to be computed.

First, we derive $\E{e^{-sW_1(k)}}$. Just before $M_1$ breaks down, either $M_2$ is up and running, or $M_2$ is in repair. The probability of either event happening is derived by studying the embedded Discrete Time Markov Chain (DTMC) of the machine states at epochs where any machine breaks down, starts being repaired or ends a repair period. 
Let $\vec{X}_n = \{X_{1,n}, X_{2,n}\}$ denote the state of the machines after the $n$-th transition. We represent the state of $M_i$ being up at time $t$, waiting for repair or being in repair, by $X_{i,n} = 1$, $X_{i,n} = 2$ or $X_{i,n} = 3$ respectively. Since life times and repair times of both machines are exponential, we have that $\{\vec{X}_n, n\ge0\}$ is a DTMC on the state space $S=\{\{1, 1\}, \{1, 3\}, \{3, 1\}, \{2, 3\}, \{3, 2\}\}$. It naturally follows that the non-zero transition probabilities $p_{i, j}$ from state $i$ to state $j$ are given by $p_{\{1, 1\}, \{3, 1\}} = 1-p_{\{1, 1\}, \{1, 3\}} = \frac{\sigma_1}{\sigma_1+\sigma_2}$, $p_{\{1, 3\}, \{1, 1\}} = 1-p_{\{1, 3\}, \{2, 3\}} = \frac{\nu_2}{\sigma_1+\nu_2}$, $p_{\{3, 1\}, \{1, 1\}} = 1-p_{\{3, 1\}, \{3, 2\}} = \frac{\nu_1}{\nu_1+\sigma_2}$ and $p_{\{2, 3\}, \{3, 1\}} = p_{\{3, 2\}, \{1, 3\}} = 1$. The DTMC is irreducible and aperiodic, hence a unique limiting distribution $\pi$ on $S$ exists and can be derived. Given this distribution function, the probability of an arbitrary transition being an event where $M_1$ breaks down equals $\pi_{\{1, 1\}}p_{\{1, 1\}, \{3, 1\}} + \pi_{\{1, 3\}}p_{\{1, 3\}, \{2, 3\}}$. The probability $z_{up}$ ($z_{down}$) of $M_2$ working (being in repair), given that $M_1$ breaks down next transition, is thus given by
\begin{align*}
z_{up} &= \frac{\pi_{\{1, 1\}}p_{\{1, 1\}, \{3, 1\}}}{\pi_{\{1, 1\}}p_{\{1, 1\}, \{3, 1\}} + \pi_{\{1, 3\}}p_{\{1, 3\}, \{2, 3\}}} = \frac{\si_1\nu_1+(\si_2+\nu_1)\nu_2}{\left(\sigma _2+\nu _1\right)\left(\sigma _1+\sigma _2+\nu _2\right)},\\
z_{down} &= \frac{\pi_{\{1, 3\}}p_{\{1, 3\}, \{2, 3\}}}{\pi_{\{1, 1\}}p_{\{1, 1\}, \{3, 1\}} + \pi_{\{1, 3\}}p_{\{1, 3\}, \{2, 3\}}} = \frac{\sigma _2\left(\sigma _1+\sigma _2+\nu _1\right)}{\left(\sigma _2+\nu _1\right)\left(\sigma _1+\sigma _2+\nu _2\right)}.
\end{align*}
Hence $M_1$ has to wait with probability $z_{down}$, whereas it does not with probability $z_{up}$. Therefore, we have that
\begin{equation*}
\E{e^{-sW_1(k)}} = z_{up} + z_{down} \frac{\nu_2}{\nu_2+s} = \frac{s \si_1 \nu_1+s (\si_2+\nu_1) \nu_2+(\si_2+\nu_1) \nu_2 (\si_1+\si_2+\nu_2)}{(\si_2+\nu_1) (s+\nu_2) (\si_1+\si_2+\nu_2)}.
\end{equation*}
For $\E{e^{-zW_1(k+1)}|R_1(k) = y}$, we first conclude that at the moment $M_1$ is taken into repair for $y$ time units, $M_2$ must be working. After these $y$ time units, we have a probability $e^{-\nu_2y}$ of $M_2$ having broken down in the mean time, whereas it is still functioning with probability $1-e^{-\nu_2 y}$. Given the former event that $M_2$ is still working at the end of $R_1(k)$, there is a probability $v$ that $M_2$ is in repair when $M_1$ breaks down again, i.e. at the start of $W_1(k+1)$. Due to the memoryless property of the exponential distribution, this probability $v$ is easily determined by the fixed point equation
\begin{equation*}
v = \frac{\si_2}{\si_1+\si_2} \left(\frac{\si_1}{\si_1+\nu_2} + \frac{\nu_2}{\si_1+\nu_2}v\right) \Rightarrow v = \frac{\si_2}{\si_1+\si_2+\nu_2}.
\end{equation*}
This allows us to determine the probability $w$ that $M_2$ is in repair at the start of $W_1(k+1)$, given that $M_2$ was waiting for repair at the end of $R_1(k)$:
\begin{equation*}
w = \frac{\si_1}{\si_1+\nu_2} + \frac{\nu_2}{\si_1+\nu_2}v = \frac{\si_1+\si_2}{\si_1+\si_2+\nu_2}.
\end{equation*}
Taking these probabilities together, we have that $W_1(k+1)$ is exponentially ($\nu_2$) distributed with probability $e^{-\nu_2y}v + (1-e^{-\nu_2 y})w$ and zero with probability $e^{-\nu_2y}(1-v) + (1-e^{-\nu_2 y})(1-w)$. Thus, 
\begin{align*}
\E{e^{-zW_1(k+1)}|R_1(k) = y} =& \left(e^{-\nu_2y}v + (1-e^{-\nu_2 y})w\right)\frac{\nu_2}{\nu_2+z} \\
&+ e^{-\nu_2y}(1-v) + (1-e^{-\nu_2 y})(1-w) \\
=& e^{-\nu_2y} \frac{\si_2\nu_2+(\si_1+\nu_2)(\nu_2+z)}{(\si_1+\si_2+\nu_2)(\nu_2+z)} \\
&+ (1-e^{-\nu_2y})\frac{(\si_1+\si_2)\nu_2+(\nu_2+z)\nu_2}{(\si_1+\si_2+\nu_2)(\nu_2+z)}.
\end{align*}
One can now compute $\E{e^{-(sD_1(k)+zD_1(k+1))}}$ using \eqref{eq:jointLST}. 
By differentation, we obtain the moments of $D_1$ and the autocovariance
\begin{equation}\label{eq:covLayeredModel}
\Cov{D_1(k)}{D_1(k+1)} = \frac{\si_1\si_2}{(\si_2+\nu_1)^2\nu_2(\si_1+\si_2+\nu_2)}.
\end{equation}
The correlation coefficient between $D_1(k)$ and $D_1(k+1)$ is now obtained by dividing this expression by the variance of the stationary downtime $D$. Now that the first two moments of the stationary downtime distribution as well as the correlation coefficient are known, we are in a position to approximate the length of $Q_1$ in the layered model with the result on the queue length in the single server model. 

\begin{remark}
The covariance as given in \eqref{eq:covLayeredModel} and the resulting correlation coefficient both evaluate to zero when $\si_1$ or $\si_2$ is zero, or when $\si_2$, $\nu_1$ or $\nu_2$ tends to infinity. If either $\si_1$ or $\si_2$ is zero, one of the machines essentially never breaks down and there is no interference between the machines. When $\si_2$ tends to infinity, there is no correlation in the downtimes of $M_1$ either, since $M_2$ is practically always down. Therefore, every single downtime of $M_1$ will consist of a repair time of $M_1$ plus a residual repair time of $M_2$, which are both independent of anything else. When $\nu_1$ tends to infinity, $M_1$ essentially does not require any repair time from the repairman and $M_2$ will never have to wait for the repairman to become idle. As a result, the downtimes of $M_2$ are independent. A waiting time for $M_1$ then comes down to either zero when $M_2$ is up, or the residual of an $M_2$ repair, of which the starting time is not biased by the breakdowns of $M_1$. This implies that downtimes of $M_1$ are independent as well in that case. Equivalently, when $\nu_2$ tends to infinity, $M_2$ does not require any repair time from the repairman, which means that downtimes of $M_2$ do not influence downtimes of $M_1$. As a result, there is no correlation in the $M_1$ downtimes in this case either. Furthermore, both the covariance and the resulting correlation coefficient are increasing in $\si_1$ and decreasing in both $\nu_1$ and $\nu_2$, due to similar arguments as the above. 
\end{remark}

\begin{remark}
In case $\si_1 = \si_2$ and $\nu_1 = \nu_2$, the LST $\E{e^{-sW_1(k)}}$ can also be obtained using the arrival theorem (cf. \cite{LavenbergReiser}), which
states that in a closed queueing network, the stationary state probabilities at instants at which customers arrive at a service unit are equal to the stationary state probabilities at arbitrary times for the network with one less customer. This implies that at time epochs $M_1$ breaks down, the probability distribution on the state of $M_2$ (either up or in repair) is equal to the steady-state distribution of the state of $M_2$ in a system with $\si_1 = 0$, but with $\si_2$ and $\nu_2$ left unchanged. In such a system, $M_2$ is the only machine requiring attention of the repairman, which greatly simplifies the analysis.
\end{remark}

\subsection{Choosing the appropriate dependence functions}\label{choosefunctions}
In order to use Theorem \ref{thm:EPL} as an approximation for the PGF of $L_i$ in the layered model, we need to identify suitable expressions for the functions $\chi_i(s)$ and $g_i(s)$. These functions need to match the dependence in the downtimes of $M_i$ as well as possible, or equivalently, the expressions for \eqref{eq:jointDLST} and \eqref{eq:jointLST} need to coincide as well as possible. The quality of the choices for the functions directly influences the accuracy of the approximation, as they are the only source of error introduced. In order to obtain suitable expressions for $\chi_i(s)$ and $g_i(s)$, we perform two-moment fits commonly used in literature. To this end, the first two moments of the distributions represented by the LST's $\chi_i(s)$ and $e^{-g_i(s)}$ must be determined. We do this based on expressions for $\chi_i'(0)$, $\chi_i''(0)$, $g_i'(0)$ and $g_i''(0)$, which we obtain by combining \eqref{eq:momentsD} and \eqref{eq:jointDExpectation} with results for the first two moments of the downtime distribution and the correlation coefficient of the consecutive downtimes. These depend on the distributions of the repair times $R_1$ and $R_2$, among others. For exponential repair time distributions, the results required were obtained in Section \ref{computationCorr} by inspection of the embedded Markov chain. By using the same methods, similar results can be obtained for phase-type repair times.

\subsubsection{Obtaining derivatives of the dependence functions}
To obtain values for $\chi_i'(0)$, $\chi_i''(0)$, $g_i'(0)$ and $g_i''(0)$, we solve a set of equations. In Section \ref{computationCorr}, we have expressed $\E{D_i}$, $\E{D_i^2}$ and $\lim_{k \rightarrow \infty} \E{D_i(k)D_i(k+1)}$ in terms of the parameters of the layered model. By \eqref{eq:momentsD} and \eqref{eq:jointDExpectation}, we have that these expressions are related to the functions $\chi_i(\cdot)$ and $g_i(\cdot)$ as follows:
\begin{align}
\E{D_i} =& \frac{\chi_i'(0)}{g_i'(0)-1}, \notag \\
\E{D_i^2} =& \frac{\chi_i''(0)-\E{D_i}(2\chi_i'(0)g_i'(0)+g_i''(0))}{1-g_i'(0)^2}, \label{eq:fittingMoments} \\
\E{D_i(k)D_i(k+1)} =& -\chi_i'(0)\E{D_i} + g_i'(0)\E{D_i^2}. \notag
\end{align}
These three equations in four unknowns fix values for $\chi_i'(0)$ and $g_i'(0)$, but leave one degree of freedom in the determination of $\chi_i''(0)$ and $g_i''(0)$. This freedom can be used to fine-tune the model. For example, one might assume the independent component of the downtime to be distributed according to a certain distribution. This would lead to an additional equation for $\chi_i''(0)$ in terms of $\chi_i'(0)$, which then also fixes values for $\chi_i''(0)$ and $g_i''(0)$.

\subsubsection{Expressions for the dependence functions}

We now determine suitable expressions for $\chi_i(\cdot)$ and $g_i(\cdot)$. For this purpose, there are many approaches possible. Below, we base the choices of $\chi_i(\cdot)$ and $g_i(\cdot)$ on two-moment approximations. To apply these two-moment approximations, we use the squared coefficient of variation (SCV), which for a random variable $Z$ is equal to $\SCV{Z} = \Var{Z}/\E{Z}^2 = \frac{\E{Z^2}}{\E{Z}^2}-1$.

Observe that in the previous section, we already calculated the ingredients needed to obtain the first two moments of the distributions represented by the LST's $\chi_i(s)$ and $e^{-g_i(s)}$. As explained in Section \ref{notation}, the function $\chi_i(s)$ is the LST of a random variable representing the independent component of the downtime, with the first two moments given by $-\chi_i'(0)$ and $\chi_i''(0)$ respectively, and consequently with an SCV of $\frac{\chi_i''(0)}{(\chi_i'(0))^2}-1$. The function $e^{-g_i(s)}$ is the LST of an infinitely divisible distribution, the distribution of the incremental component of $D(k+1)$ per unit of $D(k)$, with as first two moments $g_i'(0)$ and $(g_i'(0))^2-g_i''(0)$ respectively, and therefore an SCV of $-\frac{g_i''(0)}{(g_i'(0))^2}$. 

Based on the two moments and the SCV for each of the distributions, we employ commonly used distributional two-moment fit approximations as described in \cite[p.\ 358--360]{Tijms}. For e.g.\ an SCV smaller than one, one fits a mixture of an Erlang($k, \ga$) and an Erlang($k-1, \ga$) distribution to the moments ($k\ge 2, \ga > 0$), whereas for an SCV larger than one, one uses a $H_2$ distribution with balanced means. In the special case of an SCV of zero or one, one uses a deterministic or exponential distribution respectively. The parameters for each of these distributions are based on the first two moments which are given as an input for this procedure.

Thus we choose the functions $\chi_i(s)$ and $g_i(s)$ as follows: first we compute the moments (Section \ref{computationCorr}), which we use in \eqref{eq:fittingMoments} to find the first two derivatives of $\chi_i(s)$ and $g_i(s)$. Based on these derivatives, we then fit repair-time distributions using the two-moment approximations in \cite[p.\ 358--360]{Tijms}. Recall that we assumed in Section \ref{notation} that $g_i(s)$ has a completely monotone derivative, so that the LST $e^{-g_i(s)}$ represents an infinitely divisible distribution. We show that the distributions derived by the two-moment approximation satisfy this assumption:
\begin{itemize}[topsep=0pt, itemsep=0pt, leftmargin=*]
\item For a deterministic distribution with value $x$ and LST $e^{-sx}$, we have $g_i(s) = sx$. This function obviously has a completely monotone derivative, since $\frac{d}{d s} g_i(s) = x \ge 0$ and $\frac{d^n }{d s^n} g_i(s) = 0$ for all $n \ge 2$.
\item For an exponential distribution and a $H_2$ distribution, see \cite[p. 452]{Feller} on mixtures of exponential distributions.
\item A mixture of an Erlang($k, \ga$) and an Erlang($k-1, \ga$) distribution with weights $q \in [0, 1]$ and $1-q$ respectively has LST $q\Big(\frac{\ga}{\ga+s}\Big)^k+(1-q)\Big(\frac{\ga}{\ga+s}\Big)^{k-1}$. Hence, $g_i(s) = -\log\Big(q\Big(\frac{\ga}{\ga+s}\Big)^k+(1-q)\Big(\frac{\ga}{\ga+s}\Big)^{k-1}\Big)$. Moreover we have that 
\begin{equation*}
\frac{d^n}{d s^n}g_i(s) = (-1)^{n+1} (n-1)!\Big(\frac{k}{(\ga+s)^n}-\frac{(1-q)^n}{(\ga+(1-q)s)^n}\Big).
\end{equation*}
The second term $(n-1)!$ is positive, as well as the third term, since $(\ga+s)^n \frac{(1-q)^n}{(\ga+(1-q)s)^n} \le \frac{(\ga+(1-q)s)^n}{(\ga+(1-q)s)^n} = 1 < 2 \le k$. Therefore, derivatives of odd order are positive through the first term, and negative otherwise. Hence $g_i(s)$ has a completely monotone derivative.
\end{itemize}

\begin{example}
If for the independent component we find that the SCV $\frac{\chi_i''(0)}{(\chi_i'(0))^2}-1$ equals one, then we fit an exponential distribution with rate $(-\chi_i'(0))^{-1}$. This distribution has LST $\frac{1}{1-s \chi_i'(0)}$, which is a suitable approximation for $\chi_i(s)$. Likewise, if for the incremental component we find that the SCV $-\frac{g_i''(0)}{(g_i'(0))^2}$ equals one, again an exponential distribution is fitted with rate $(g_i'(0))^{-1}$, which provides the suggestion $e^{-g_i(s)} = \frac{1}{1+s g_i'(0)}$, or equivalently $g_i(s) = \log[1+s g_i'(0)]$.
\end{example}

\subsection{Resulting approximation}\label{resultingApproximation}
Now that we have obtained expressions for $\chi_i(s)$ and $g_i(s)$, Theorem \ref{thm:EPL} directly forms an approximation for the (PGF of the) marginal queue length distribution in the layered model:
\begin{apprx}\label{apprx:approx}
In the two-layered model, an approximation $L_{i,app}$ for the queue length of $Q_i$ is given by the PGF
\begin{equation}\label{eq:EPLapp}
\E{p^{L_{i, app}}} = p_{up} \E{p^M} + p_{down} \E{p^{M+H(D^{past})}},
\end{equation}
where the expressions for $p_{up}, p_{down}, \E{p^M}$ and $\E{p^{M+H(D^{past})}}$ are as given in Section \ref{analysisSSQ}, but with $\la$, $\tildeB(\cdot)$, $\si$, $\chi(\cdot)$ and $g(\cdot)$ replaced by the two-layered model counterparts $\la_i$, $\tildeB_i(\cdot)$, $\si_i$, $\chi_i(\cdot)$ and $g_i(\cdot)$. 
\end{apprx}

\begin{remark}\label{rem:reasonApprox}
Note that \eqref{eq:jointLST} cannot be rewritten in the form of the bivariate LST \eqref{eq:jointDLST}, i.e., the dependence structure we assumed in \eqref{eq:combedecomposition} or \eqref{eq:jointDLST} does not model perfectly the distribution and the interdependence of the downtimes of $M_i$. In addition to this modelling approximation, a numerical approximation is introduced by truncation of the infinite product in \eqref{eq:infprodform}. However, the latter error can be made negligibly small.
\end{remark}

\subsection{Approximations for generalisations of the layered model}\label{modelExtensions}
Throughout the previous sections, we derived an approximation for the layered model with two machines {and a single repairman}. However, the approach followed can be readily extended to approximate queue lengths of first-layer queues in an equivalent model with a larger number of queues and machines {and multiple repairmen. Moreover, the approach followed in Section \ref{computationCorr} for deriving the moments and the correlation coefficient of the downtimes remains valid when assuming phase-type repair time distributions.} We discuss these model generalisations below. Note that in {the cases below}, we only apply the analysis on the single server queueing model as given in Section \ref{analysisSSQ} without any modification.

\paragraph*{Larger numbers of machines and first-layer queues.} When we generalise the layered model as described in Section \ref{notation} to allow for $N>2$ machines $M_1, \ldots, M_N$ and thus $N$ first-layer queues $Q_1, \ldots, Q_N$, we can still use Approximation \ref{apprx:approx} like before to approximate the PGFs of $L_1, \ldots, L_N$. The approach for deriving appropriate functions for $\chi_i(s)$ and $g_i(s)$, $i=1, \ldots, N$ needed to use Approximation \ref{apprx:approx} remains largely the same. 
However, by introducing a larger number of machines, the computation of the first two moments and the correlation coefficient of downtimes in the layered model becomes increasingly cumbersome. As opposed to the case $N=2$ as assumed in Section \ref{computationCorr}, the repair buffer can now contain multiple machines. Since the repair facility serves the queue in a First-Come-First-Serve (FCFS) manner, the order in which the machines are waiting for repair needs to be included in the state space of the embedded DTMC describing the states of the machines. Subsequently, considerably more conditioning is needed to compute the terms $\E{e^{-sW_1(k)}}$ and $\E{e^{-zW_1(k+1)}|R_1(k)=y}$ in \eqref{eq:jointLST} and ultimately the moments and the correlation coefficient of the downtimes.

\paragraph*{Multiple repairmen.} In the layered model, it is assumed there is only one repairman assigned to repair machines. This assumption can be relaxed to allow for $K>1$ repairmen in the repair facility, each working on a different machine and taking the broken machines out of the repair buffer in a FCFS manner. When $K \ge N$, a broken machine will always be taken into repair immediately. As a result, machines do not compete for repair facilities anymore, and consecutive downtimes of a machine become independent. Therefore, when taking $\chi_i(s)$ such that it equals the LST of the repair time distribution of $M_i$ and taking $g_i(0)=0$, the \textit{exact} PGF of $L_i$ is given by Approximation \ref{apprx:approx}. When $N>K$, consecutive downtimes of the machine remain correlated. Again, the approximation as developed in this paper remains valid, but difficulties arise in deriving the appropriate functions for $\chi_i(s)$ and $g_i(s)$, $i=1, \ldots, N$. More specifically, the computation of the moments and the correlation coefficient of the consecutive downtimes of each of the machines becomes again increasingly complicated. Since machines can now be repaired simultaneously, the order in which machines return to service after repair is not necessarily the same as the order in which machines break down. This introduces extra conditioning in e.g. the computation of $\E{e^{-zW_1(k+1)}|R_1(k)=y}$ in \eqref{eq:jointLST}, since the machines which were already waiting for repair at the start of $W_1(k)$ may not have returned to an operational state again by the time $R_1(k)$ has passed. This evidently influences $W_1(k+1)$.

\paragraph*{Phase-type distributed repair times.} In Section \ref{computationCorr}, we derived an explicit expression for the correlation coefficient of consecutive downtimes of a machine, in case repair times are exponentially distributed. For phase-type repair-time distributions, a similar approach for studying the embedded Markov chain can be followed to obtain the numbers needed to construct the functions $\chi_i(s)$ and $g_i(s)$ in Section \ref{choosefunctions}. The computations may become more involved, but remain conceptually the same. This leads to a more complicated expression for $\E{e^{-sW_1(k)}}$ in \eqref{eq:jointLST}. For the computation of $\E{e^{-zW_1(k+1)}|R_1(k)=y}$, extra conditioning on the repair phase is also needed.

\section{Numerical Study}\label{numStudy}
We now give some numerical examples to assess the accuracy of Approximation \ref{apprx:approx}. In Section \ref{initialGlance}, we compare our approximation for the marginal queue length to simulation results for a typical setting. Then, in Section \ref{testbed}, we observe the effect of the model parameters and identify several key factors determining the accuracy of the approximation.

\subsection{Initial glance at the approximation}\label{initialGlance}
Consider a system where $\la_1 = 0.25$, $\si_1 = \si_2 = 1$ and $B_1$, $R_1$ and $R_2$ are exponentially (1) distributed. Note that the settings for $\la_2$ and $B_2$ do not influence the length of $Q_1$. In Figure \ref{fig:singleInstance} we plot the approximated PGF of $L_{1, app}$ and the PGF of $L_1$ obtained by simulation. 
\begin{figure}
\begin{center}
\includegraphics[width=\figurewidth]{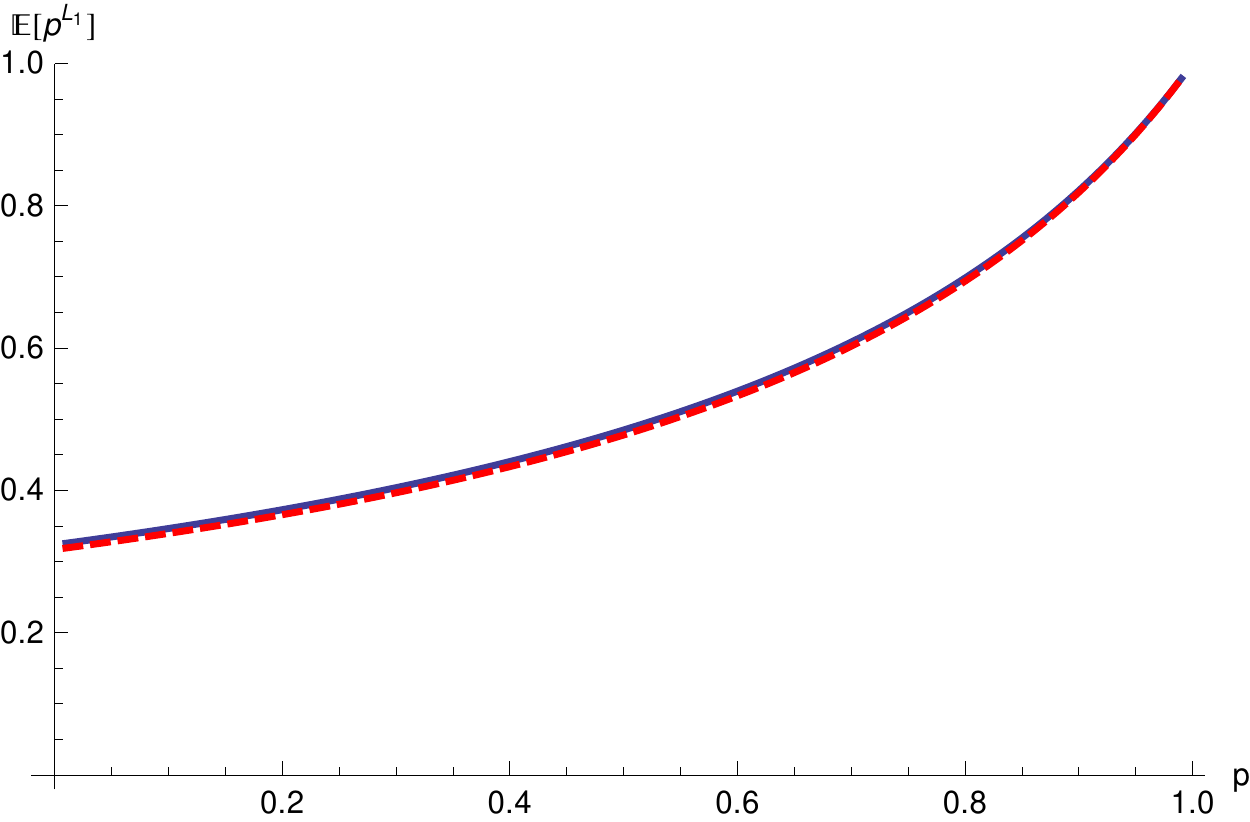}
\caption{Plot of $\E{p^{L_{1, app}}}$ (solid curve) and $\E{p^{L_1}}$ (dashed curve).}
\label{fig:singleInstance}
\end{center}
\end{figure}

We observe in this figure that $\E{p^{L_{1, app}}}$ matches $\E{p^{L_1}}$ very closely. The error made is largest at $p=0$, where $\E{p^{L_{1, app}}}$ is 2.09$\%$ larger than the value of $\E{p^{L_1}}$. The error decreases in $p$, which is why $\E{L}$ is approximated well. We have that $\E{L_{1, app}} = \frac{d}{dp} \E{p^{L_{1, app}}}|_{p=1} =$ 2.205, while the theoretical mean $\E{L}$ equals 2.220. This is a typical performance of the approximation. As we will see in the next section, the accuracy of the approximation can become worse if the downtimes in the layered model are extraordinarily correlated (cf. Figure \ref{fig:usVsWartenHorst}). Nonetheless, in realistic systems, even in the worst-case scenarios, the difference in the expected queue lengths is about 10\%. 

\subsection{Accuracy of the approximation}\label{testbed}
We now turn to the study of the parameter effects on the accuracy of the approximation. As we will see, the approximation performs very well over a wide range of parameter settings. Moreover, we observe several parameter effects. For example, the loads offered to the queues have little impact on the accuracy of the approximation, whereas the difference of time scales of product arrivals and services compared to that of machine breakdowns and repair, as well as the level of dependence between the downtimes, affect the accuracy significantly.

To study the accuracy of the approximation overall, we compare the approximated values for the mean of $L_1$ with the values obtained by numerical methods such as simulation or the power-series algorithm (see e.g. \cite{BlancOverview} or \cite{DorsmanVDMeiVlasiou}) in various instances of the two-layered model. In particular, we regard instances where $B_1$ is exponentially $(\delta_1)$ distributed, and $R_1$ and $R_2$ are exponentially distributed with rates $\nu_1$ and $\nu_2$ respectively. The complete test bed of instances that are analysed contains 675 different combinations of parameter values, all listed in Table \ref{table:paramValues}. This table lists multiple values for the workload of $Q_1 (\rho_1)$, the breakdown rates of $M_1$ and $M_2$ ($\si_1$ and $\si_2$) and the repair rates of $M_1$ and $M_2$ ($\nu_1$ and $\nu_2$). In particular, these rates are varied in the order of magnitude through the values $a^\si_i$ and $a^\nu_i$ as specified in the table and in the imbalance, through the values $b^\si_j$ and $b^\nu_j$. As a consequence, the breakdown rates $(\si_1, \si_2)$ and the repair rates $(\nu_1, \nu_2)$ run from $(0.1, 0.1)$, being small and perfectly balanced, to $(50, 10)$, being large and significantly imbalanced. Note that the values for $\rho_2$ and the service time distribution for $Q_2$ do not influence $L_1$, and hence are left unspecified.
\begin{table}[t]
\centering
\begin{tabular}{c|c}
\hline
Parameter & Considered parameter values \\ [0.5ex]
\hline
$\rho_1$ & $\{0.25, 0.5, 0.75\}$ \\
& \\
$\delta_1$ & $\{1\}$ \\
& \\
$(\si_1, \si_2)$ & $a^\si_i \cdot b^\si_j \qquad\forall i,j$ \\
								 & where $\vec{a}^\si = \{0.1, 1, 10\}$ and \\
								 & $\vec{b}^\si = \{(1, 1), (1, 2), (2,1), (1, 5), (5, 1)\}$ \\ \\
$(\nu_1, \nu_2)$ & $a^\nu_i \cdot b^\nu_j \qquad\forall i,j$ \\
								 & where $\vec{a}^\nu = \{0.1, 1, 10\}$ and \\
								 & $\vec{b}^\nu = \{(1, 1), (1, 2), (2,1), (1, 5), (5, 1)\}$ \\ \\
\hline
\end{tabular}
\caption{Parameter values of the test bed used to compare the approximation to exact results.}\label{table:paramValues}
\end{table}

For the systems corresponding to each of the parameter combinations in Table \ref{table:paramValues}, we compare the \emph{approximated} mean queue lengths of the first queue, $\E{L_{1, app}} = \frac{d}{dp} \E{p^{L_{1, app}}}|_{p=1}$ to the actual mean queue length $\E{L_1}$. Subsequently, we compute the relative error of these approximations, i.e. 
\begin{equation}\label{eq:definitionRelError}
\Delta := 100\% \times \Big|\frac{\E{L_{1, app}}-\E{L_1}}{\E{L_1}}\Big|. 
\end{equation}
In Table \ref{table:errorBreakdown} the resulting relative errors are summarised. We note that none of these errors is greater than 5$\%$, and the majority of these errors does not exceed 1$\%$. This seems to remain the case even as the load goes to one, or for extreme values of the imbalance in the system. One expects that for extremely small values of the system (i.e., for $a^\si_i, a^\nu_i \rightarrow 0$), the approximation becomes progressively worse. However, these values do not represent realistic systems. These results show that Approximation \ref{apprx:approx} works very well for typical systems.

\begin{table}
\centering
\begin{tabular}{|c||c|c|c|c|c|c|}
\hline
& 0-0.01\%	& 0.01-0.1\% & 0.1-1\% & 1-5\% & $>$5\%+ \\
\hline
\% of rel. errors & 25.93\% & 32.30\% & 32.15 \% & 9.63 \% & 0.00\% \\
\hline
\end{tabular}
\caption{Relative errors of the mean queue length approximation categorised in bins.}\label{table:errorBreakdown}
\end{table}

To observe any parameter effects, we also give the mean relative error categorised in some of the variables in Table \ref{table:categorisedInParameterValues}. From Table \ref{table:categorisedInParameterValues}\subref{table:categorisedInRho}, we see that the accuracy of the approximation is not very sensitive to the load of the queue. From Tables \ref{table:categorisedInParameterValues}\subref{table:categorisedInFSigma} and \ref{table:categorisedInParameterValues}\subref{table:categorisedInFNu} we however note that the orders of magnitude of the breakdown and repair rates do impact the accuracy of the approximation. This is due to the fact that the rate at which products move (i.e., arrive and get served) with respect to the life and repair times of the machine do differ in these cases. From Tables \ref{table:categorisedInParameterValues}\subref{table:categorisedInMSigma} and \ref{table:categorisedInParameterValues}\subref{table:categorisedInMNu}, we see that the imbalance of the breakdown and repair rates do impact the accuracy as well (but to a lesser extent). We discuss the observed effects in more detail below.

\begin{table}[ht!]
    \begin{center}
        \subtable[]
        {
            \begin{tabular}{|c||c|c|c|}
						
						\hline
						$\rho_1$ & 0.25 & 0.5 & 0.75 \\
						\hline
						Mean rel. error & 0.328\% & 0.316\% & 0.335\% \\
						\hline
						\end{tabular}
						\label{table:categorisedInRho}
        } \\
        \subtable[]
        {
            \begin{tabular}{|c||c|c|c|}
						
						\hline
						$a^\si_i$ & 0.1 & 1 & 10 \\
						\hline
						Mean rel. error & 0.564\% & 0.294\% & 0.121\%   \\
						\hline
						\end{tabular}
						\label{table:categorisedInFSigma}
        } \\
        \subtable[]
        {
            \begin{tabular}{|c||c|c|c|}
						
						\hline
						$a^\nu_i$ & 0.1 & 1 & 10 \\
						\hline
						Mean rel. error & 0.727\% & 0.219\% & 0.033\%   \\
						\hline
						\end{tabular}
						\label{table:categorisedInFNu}
				} \\
				\subtable[]
        {
            \begin{tabular}{|c||c|c|c|c|c|}
						
						\hline
						$b^\si_j$ & (1, 1)& (1, 2)& (2,1)& (1, 5)& (5, 1) \\
						\hline
						Mean rel. error & 0.354\% & 0.275\% & 0.414\% & 0.149\% & 0.439\% \\
						\hline
						\end{tabular}
						\label{table:categorisedInMSigma}
        } \\
        \subtable[]
        {
            \begin{tabular}{|c||c|c|c|c|c|}
						
						\hline
						$b^\nu_j$ & (1, 1)& (1, 2)& (2,1)& (1, 5)& (5, 1) \\
						\hline
						Mean rel. error & 0.395\% & 0.344\% &0.143\% &0.212\% &0.537\%   \\
						\hline
						\end{tabular}
						\label{table:categorisedInMNu}
				}               
                
			\end{center}
			\caption{Mean relative error categorised in $\rho_1$ \subref{table:categorisedInRho}, the variables controlling the order of magnitude of $\si_i$ and $\nu_i$, namely $a^\si_i$ \subref{table:categorisedInFSigma} and $a^\nu_i$  \subref{table:categorisedInFNu}, and the variables controlling the imbalance, $b^\si_j$  \subref{table:categorisedInMSigma} and $b^\nu_j$ \subref{table:categorisedInMNu}.}\label{table:categorisedInParameterValues}
\end{table}

\begin{figure}
\begin{center}
\includegraphics[width=\figurewidth]{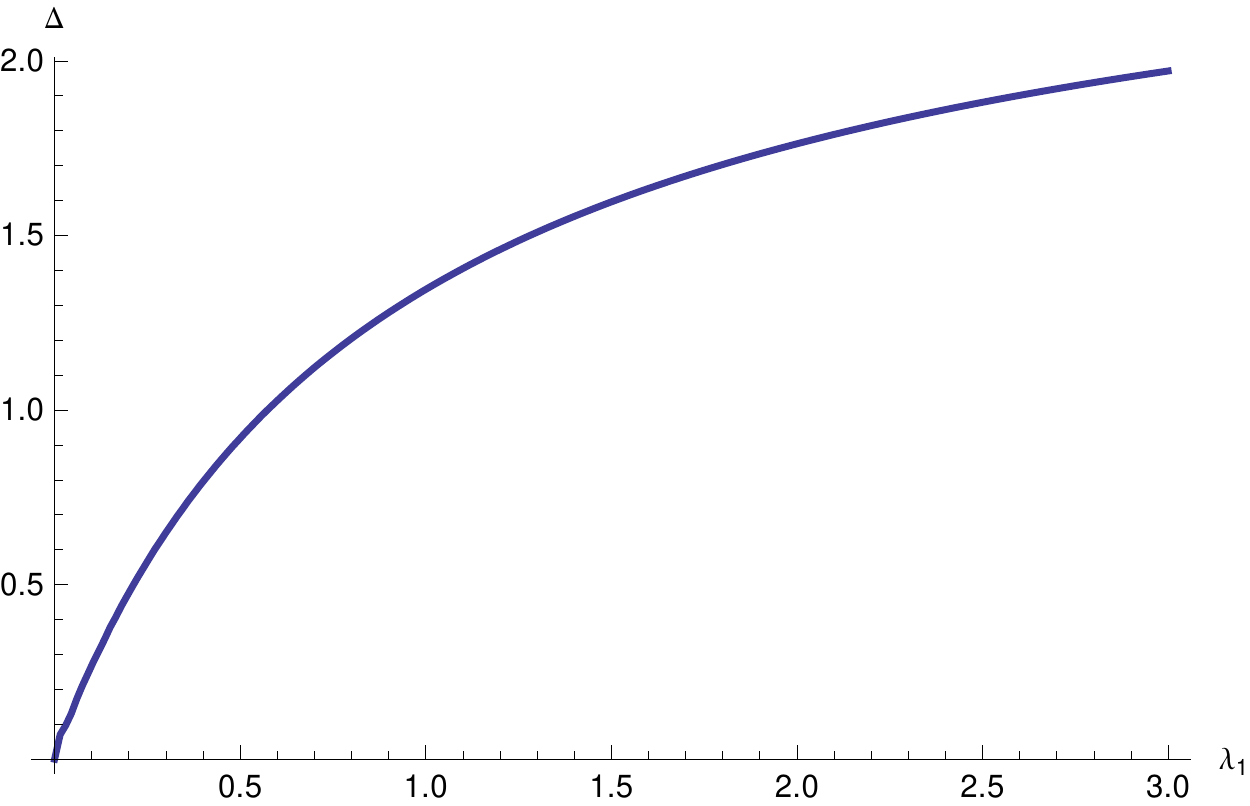}
\caption{The relative error made as a function of the products' arrival rate.}
\label{fig:numericsFastProducts}
\end{center}
\end{figure}

\paragraph*{Effect of fast moving products.}
From Table \ref{table:categorisedInParameterValues} we see that decreasing the uptimes and repair times of the machines relative to the movement speed of the products, or equivalently, that increasing the moving speed of products (i.e. arrival rate and service rate) relative to the uptimes and repair times, leads to a decrease in the performance of the approximation. To further examine this effect, we regard the length of $Q_1$ in systems with arrival rates ranging from $\la_1=0$ to $\la_1=3$ and an exponentially distributed service time $B_1$ with rate $10\la_1/3$ varying accordingly, so as to keep the workload fixed. Furthermore, the breakdown rates {are given by $\si_1=\si_2=1$ and the repair times $R_1$ and $R_2$ are exponentially (1) distributed.} After applying Approximation \ref{apprx:approx} on the mean queue length of $Q_1$ in these systems and comparing it with exact results, we obtain Figure \ref{fig:numericsFastProducts}, where the relative error $\Delta$ (see \eqref{eq:definitionRelError}) is given as a function of $\lambda_1$. We indeed observe that the faster the products arrive (and get served), the more inaccurate the approximation becomes. This effect can be explained by the fact that faster moving products are more sensitive to variations caused by dependence in the downtimes. A small increase in the downtime, causes more additional products to build up in the queue, while such an increase may even remain unnoticed in case of slow products with long interarrival times.  Hence, in the former, the error made in approximating the dependence structure of consecutive downtimes by the functions $\chi_1(\cdot)$ and $g_1(\cdot)$ shows itself more in the approximation of the mean queue length than in the latter.

\paragraph*{Effect of the degree of dependence.}
From Table \ref{table:categorisedInParameterValues} it is apparent that the accuracy of the approximation is influenced by the values for $b^\si_j$ and $b^\nu_j$. This can be mainly explained by the fact that these values determine the strength of the dependence between consecutive downtimes in $M_1$. 
To illustrate this effect, {let us observe systems where $B_1$, as well as both $R_1$ and $R_2$, are exponentially (1) distributed. Moreover, we have $\la_1 = 1/4$ and $\si_1=1$.} In Figure \ref{fig:numericsComparisonWithDependence}, we show the relative error $\Delta$ in approximating the mean length of $Q_1$ as a function of $\si_2$. Since the breakdown rate of $M_2$ varies in these systems, we have that the strength of the dependence changes accordingly. In Figure \ref{fig:numericsComparisonWithDependence}, $r_{scaled}$, the correlation coefficient of consecutive downtimes as computed in Section \ref{computationCorr} is given in a scaled form, so as to fit the graph. We see that the accuracy of the approximation is, at least in this case, largely determined by the strength of the correlation between the downtimes. Intuitively this makes sense, since in case there is no such correlation in the model (for example when $\si_2 = 0$ or $\si_2 \uparrow \infty$), the approximation should at least be close to being exact. Using the procedure of Section \ref{choosefunctions}, $g_1(\cdot)$ will resolve to zero in such a case, and when $\chi_1(\cdot)$ is chosen to match $\tildeD(\cdot)$, the approximation becomes exact, as the assumed downtime structure in \eqref{eq:combedecomposition}  with the functions $\chi_1(\cdot)$ and $g_1(\cdot)$ will then describe the dependence in an exact way.

\begin{figure}
\begin{center}
\includegraphics[width=\figurewidth]{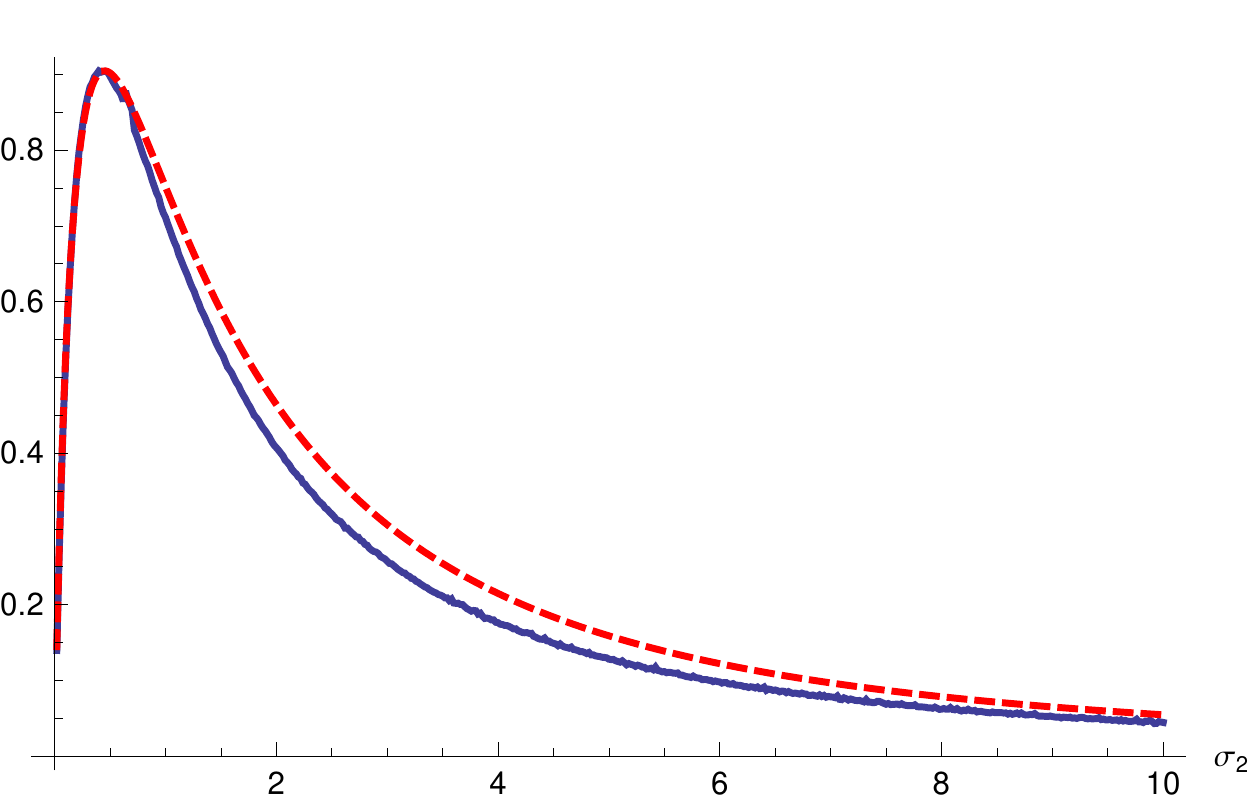}
\caption{The relative error made, $\Delta$ (continuous curve), and the scaled value of the correlation coefficient, $r_{scaled}$ (dashed curve) as a function of the breakdown rate of $M_2$.}
\label{fig:numericsComparisonWithDependence}
\end{center}
\end{figure}

\paragraph*{Effect of the variability of the repair times.}
In Table \ref{table:categorisedInParameterValues}, we have only included instances of the layered model for which repair times are exponentially distributed. In practice however, the level of variability in the repair times may be much higher. To investigate whether the accuracy of Approximation \ref{apprx:approx} is influenced by this, we again study the instance of the model as presented in Section \ref{initialGlance}. However, we now assume the repair times $R_1$ and $R_2$ to be identically, hyper-exponentially distributed with mean one. In particular, we study the behaviour of the relative error made by the approximation as the squared coefficients of variation of $R_1$ and $R_2$ ($\SCV{R_1}$ and $\SCV{R_2}$) increase.  Figure \ref{fig:varRepairTime} shows the relative error (as defined in \eqref{eq:definitionRelError}, however now with the sign included)  in approximating $\E{L_1}$ versus the squared coefficient of variation of the repair times. The various parameter combinations for the hyper-exponential repair-time distribution needed to satisfy the SCV values are chosen as described in \cite[p.\ 358--360]{Tijms}. The figure shows that even up to an SCV of 8, which represents highly variable repair times for both machines, the error made is only approximately 1\%. Therefore, the accuracy of Approximation \ref{apprx:approx} seems to remain very high even for very variable repair times.

\begin{figure}
\begin{center}
\includegraphics[width=\figurewidth]{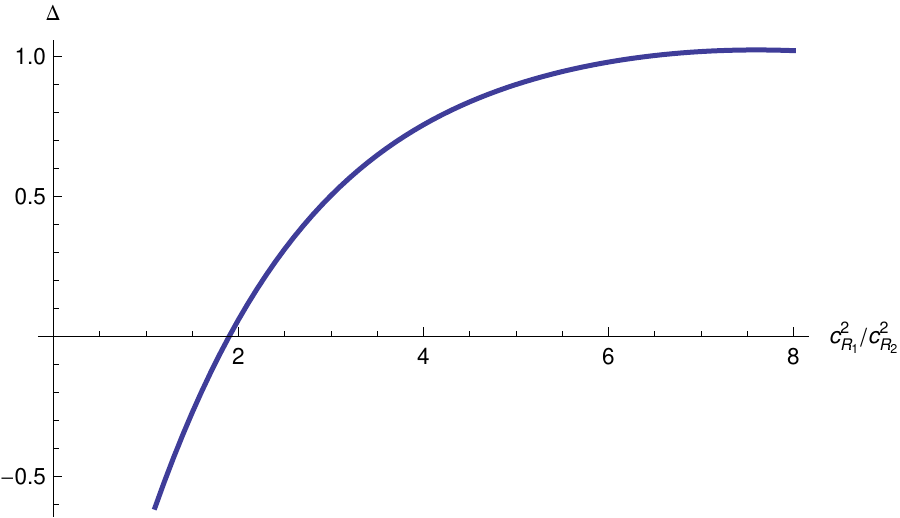}
\caption{The accuracy of the approximation as a function of the squared coefficient of variation of the repair times $R_1$ and $R_2$}
\label{fig:varRepairTime}
\end{center}
\end{figure}

\paragraph*{Comparison with Wartenhorst's approximation.} The approximation approach we used involves the study of the dependence in the second layer of the model. The two-layered model has been studied before by Wartenhorst \cite{Wartenhorst}. However, in \cite{Wartenhorst} it is assumed that $\si_1=\si_2$ {and that $R_1$ and $R_2$ are exponentially distributed with equal rates. This results in $\tildeD_1(\cdot) = \tildeD_2(\cdot)$.} In his study, Wartenhorst approximates the mean length of $Q_1$ with the mean queue length in a single server vacation queue, where the distribution of the vacation lengths equals the stationary downtime distribution of $M_i$, but where the downtimes are assumed to be \emph{completely independent}. The queue length distribution of this single server queue is obtained by applying the Fuhrmann-Cooper decomposition (cf. \cite{FuhrmannCooper}). Wartenhorst's approximation is exact by construction for a system where downtimes are independent, and accurate whenever downtimes are only slightly dependent. Although \cite{Wartenhorst} assumes equal breakdown rates and identically distributed repair times for the machines, with some effort his approach can be extended to allow for cases where these assumptions are violated. 

To compare the accuracy of the approximation derived in the present paper with that of \cite{Wartenhorst}, we study a set of systems with highly dependent downtimes. For these systems,  {let $\si_1 = $100, $\si_2 = 0.02$, and $R_2$ is exponentially distributed with rate $0.01$}. To maximise the correlation in the downtimes of $M_1$, we assume $R_1$ to be hyper-exponentially distributed with probability parameters 0.975 and 0.025, and rate parameters 100 and 0.01. The value for the correlation coefficient in these systems evaluates to 0.26. We vary $\la_1$ between 0 and 0.01. Furthermore, we assume $B_1$ to be exponentially distributed with rate 500$\la_1$, so as to keep the workload at $Q_1$ fixed. 

In Figure \ref{fig:usVsWartenHorst}, the relative error $\Delta$ in approximating $\E{L_1}$ is given for both the approximation obtained in this paper and Wartenhorst's approximation.
We see the same effect of fast moving products as before. The faster the products move, the less accurate both approximations become. However, we see that the degree of dependence has a significantly larger effect on the accuracy of Wartenhorst's approximation than on that of the approximation presented here. 
Since the degree of the dependence between the downtimes is the major source of inaccuracy for both approximations (cf. Section \ref{effectDependence}), one could conclude that the approximation we derived performs as well as Wartenhorst's approximation in cases with only slight dependences, and even better in cases with stronger correlations between the downtimes.

\begin{figure}
\begin{center}
\includegraphics[width=\figurewidth]{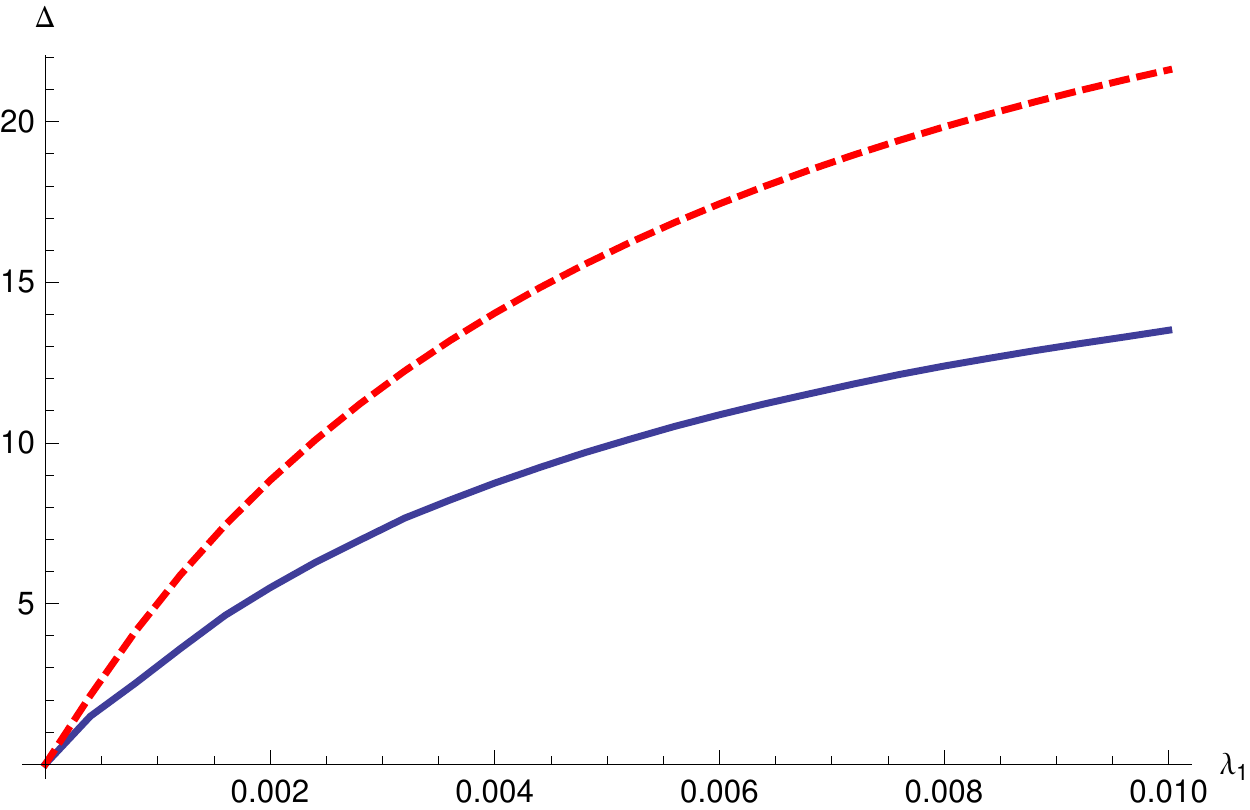}
\caption{The relative error made by the approximation in the present paper (solid curve) and Wartenhorst's approximation (dashed curve)}
\label{fig:usVsWartenHorst}
\end{center}
\end{figure}

\section{Conclusions}\label{conclusion}
We have considered a layered queueing network with two interacting layers, with machines being servers of products in one layer and customers in the other. Since the machines compete for repair facilities in the second layer, the queue lengths of products in the first layer are correlated, which makes analysis of this layer complicated. In fact, even when making symplifying assumptions such as Poisson product arrivals and exponential repair times for the machines, there are no explicit formulas available for the queue length distributions of the queues in the first layer. 

We have analysed the queue length distributions of these queues by noting that each of these queues in isolation can be seen as a single server queue with correlated server downtimes. What is not known, however, is how these downtimes are correlated exactly. Thus, in Section \ref{analysisSSQ}, we have derived an exact expression for the (PGF of the) queue length distribution for an M/G/1 single server queue with correlated successive server downtimes. This result seems to be of independent interest. The downtimes dependence was given by \eqref{eq:combedecomposition}, which covers a wide class of downtime dependence structures. As discussed earlier, one can view \eqref{eq:combedecomposition} as  a decomposition of the downtimes in an independent component and a dependent component.

The single server model, and in particular the specific dependence structure we considered, does not provide an exact representation of the dependence of successive downtimes of a machine in the layered model, but we were able to mimic that dependence very accurately by using information on the first two moments of the downtime distribution and the correlation coefficient of two consecutive downtimes of a machine. Then, the results on the single server queue given in Section \ref{analysisSSQ} form an approximation for the PGF of the queue length distribution of any first-layer queues. 

We have examined the accuracy of our approximation by performing a numerical study in Section~\ref{numStudy}. Although we have provided an approximation for the whole distribution of the marginal queue length in Approximation~\ref{apprx:approx}, in Section~\ref{numStudy} we provided some numerical results for the mean queue lengths. In general, we saw that the approximation works extremely well for a wide range of parameter settings. 

The major findings were as follows. First, for typical systems, our approximation is within 5\% of simulation results, while for more than half of the parameter settings we tested, the relative error was less than 0.1\%. Thus, the approximation is very accurate throughout the range of the parameters involved. 

Second,  we observed that the accuracy of the approximation is not very sensitive to the load of the queues while the orders of magnitude of the breakdown and repair rates do impact the accuracy of the approximation. However, the effect is due to the difference of time scales of product arrivals and services compared to that of machine breakdowns and repair times. Thus, we conclude that different time scales on the two layers are the major source of inaccuracy, while \eqref{eq:combedecomposition} works very well, despite the fact that it does not capture perfectly the dependence of the layered model; see also Remark~\ref{rem:reasonApprox}. The effect of the different time scales was also observed when considering fast moving products, where we observed that fast moving products are more sensitive to variations caused by the dependence in the downtimes. 

Additionally, we saw that variability of the repair times, as long as it does not affect significantly the correlation of the downtimes, does not have a big impact on the performance of our approximation. Last, we conclude that our approximation performs at least as well as the other existing approximation \cite{Wartenhorst} for this system, and in general it performs better. As a final conclusion, one cannot ignore the dependence or different time scales between layers in layered queueing networks.

%\bibliographystyle{plain}
%\bibliography{bib}

\appendix
\section{Proof of Lemma \ref{lem:uniqueRootPhi}}\label{proofLemmaDenominator}
\begin{proof}
The function $E(p)$ is continuous on $[0, \mu(\si))$. We also have that $1-E(0) = 1$ and $\lim_{p\rightarrow \mu(\si)} 1-E(p) = -\infty$. Hence, there exists at least one root in $(0,\mu(\si))$ by Bolzano's theorem.

To prove that there is at most one root in $(0,\mu(\si))$, we show that $1-E(p)$ is strictly decreasing in $p$, or equivalently, that $E(p)$ is strictly increasing in $p$ by studying the monotonicity of each of the terms in \eqref{eq:Ep} separately. 
First, since $\chi(\cdot)$ is the LST of a positive continuous random variable (see Section \ref{notation}), it is a strictly decreasing function. Recalling that $\lambda>0$, this means that the first term $\chi(\la(1-p))$ is therefore strictly increasing in $p$. 
For the monotonicity of the second term $A^2(p)$, we show that $A(p)$ is strictly decreasing, or equivalently, $A'(p) < 0$ for all values of $p$ considered. We have that 
\begin{align}
A'(p)  =& \frac{\si\la}{(\si+\la(1-p))^2}\frac{p(1-\tildeB(\si+\la(1-p)))}{p-\tildeB(\si+\la(1-p))} \notag \\
& + \frac{\si}{\si+\la(1-p)}
\Big(\frac{(1-\tildeB(\si+\la(1-p)))+p\la\tildeB'(\si+\la(1-p))}{p-\tildeB(\si+\la(1-p))} \notag \\ 
&\qquad \qquad \qquad \qquad \qquad-\frac{p(1-\tildeB(\si+\la(1-p)))(1+\la\tildeB'(\si+\la(1-p)))}{(p-\tildeB(\si+\la(1-p)))^2}\Big). \label{eq:APrime}
\end{align}
Since $\tildeB(\cdot)$ is the LST of a positive, continuous random variable, we have that $1-\tildeB(\si+\la(1-p))>0$ and $\tildeB'(\si+\la(1-p))>0$, which also readily implies that $p\la\tildeB'(\si+\la(1-p))>0$ and $1+\la\tildeB'(\si+\la(1-p))>0$. This means that in \eqref{eq:APrime}, the numerator of the second fraction in the first term and the numerators of the fractions between the brackets are all positive. Moreover, we have that $p-\tildeB(\si+\la(1-p))<0$ for all $p \in (0, \mu(\si))$, which consequently implies through the denominators that the second fraction of the first term and the expression between the brackets each are negative. Combining this with the fact that evidently both $\si/(\si+\la(1-p))$ and $\si\la/(\si+\la(1-p))^2$ are positive as $p<1$, we have that $A'(p)<0$ and thus that the second term $A^2(p)$ is strictly increasing. For the third term $\tildeD(\la(1-p)+g(\la(1-p)))$, we have that $\la(1-p)+g(\la(1-p))$ is strictly decreasing in $p$, for $g(s)$ is increasing in $s$, because $e^{-g(s)}$ is the LST of a positive continuous random variable and therefore strictly decreasing in $s$. Since $\la(1-p)+g(\la(1-p))$ is strictly decreasing in $p$ and the LST $\tildeD(\cdot)$ is a strictly decreasing function, the third term is strictly increasing in $p$.

All of the terms in \eqref{eq:Ep} are strictly increasing for the values of $p$ considered. As a result, $E(p)$ itself is strictly increasing for $p \in (0, \mu(\si))$. Therefore, the denominator $1-E(p)$ has exactly one root on the real line in $(0, \mu(\si))$.
\end{proof}
\includepdf[pages=1-last]{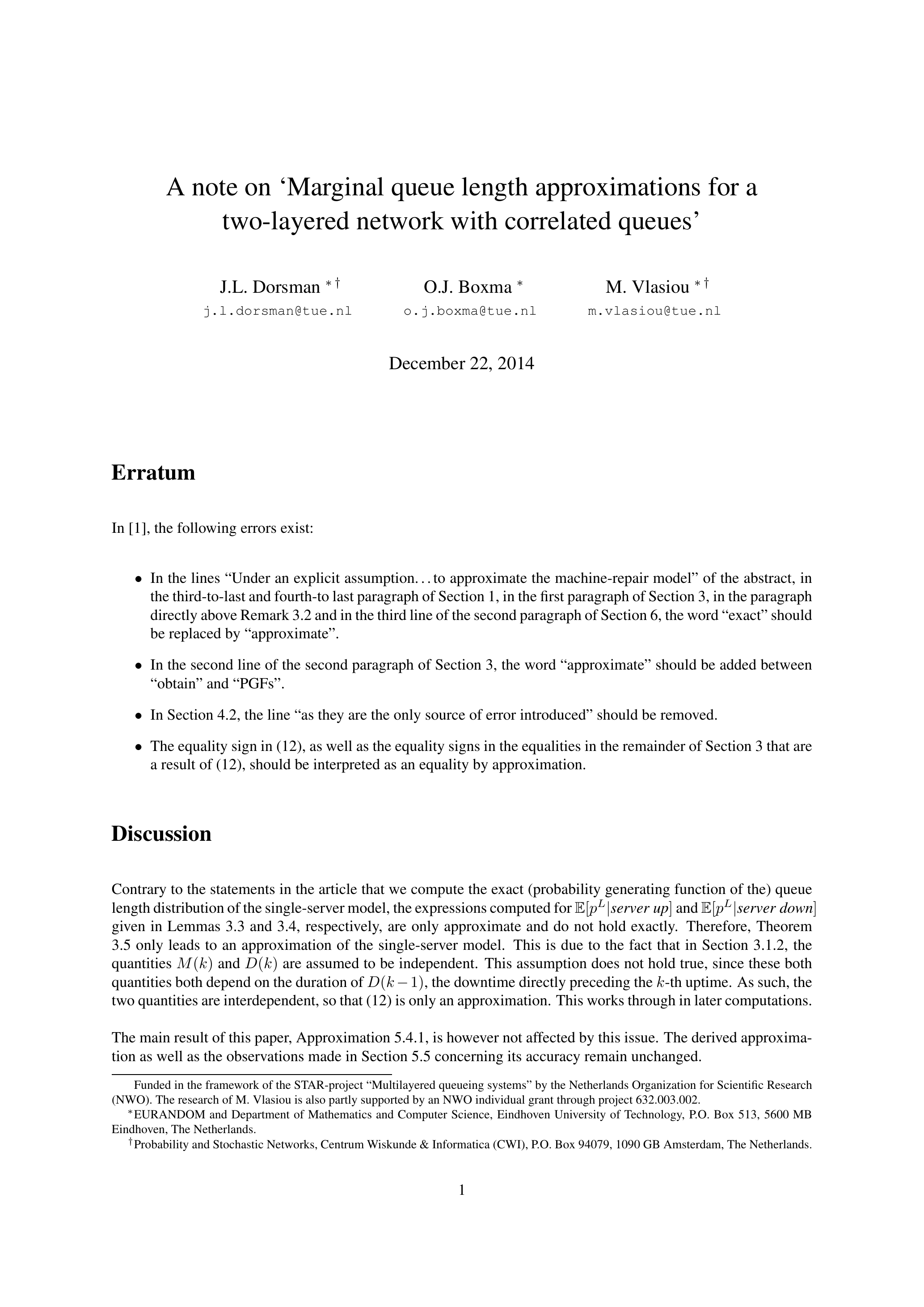}

\end{document}